\documentclass[final]{siamltex}
 \usepackage{amsfonts}
 \usepackage{graphicx}
\usepackage{algorithm}
\usepackage{algorithmic}
 \usepackage{amsmath}
 \usepackage{mathrsfs}
 \usepackage{float}
 \usepackage{caption}
 \usepackage{epsfig,amsmath,amssymb}
\usepackage{array}
\usepackage{color}

\title{An Approach to Making SPAI and PSAI Preconditioning Effective
for Large Irregular Sparse Linear Systems\thanks{Supported
by National Basic Research Program of China 2011CB302400 and the
National Science Foundation of China (No. 11071140).}}

\author{Zhongxiao Jia\thanks{Department of Mathematical Sciences,
Tsinghua University, Beijing 100084, People's Republic of China
(jiazx@tsinghua.edu.cn).} \and Qian Zhang\thanks{Department of Mathematical
Sciences, Tsinghua University, Beijing 100084, People's Republic of China
(qianzhang.thu@gmail.com)}.}

\begin{document}

\maketitle
\floatname{algorithm}{Procedure}
\begin{abstract}
We investigate the SPAI and PSAI preconditioning procedures and
shed light on two important features of them: (i) For the large linear system
$Ax=b$ with $A$ irregular sparse, i.e., with $A$ having $s$ relatively
dense columns, SPAI may be very costly to implement, and the resulting sparse
approximate inverses may be ineffective for preconditioning. PSAI can be
effective for preconditioning but may require excessive storage and be unacceptably
time consuming; (ii) the situation is improved drastically when $A$ is regular
sparse, that is, all of its columns are sparse. In this case, both SPAI and PSAI
are efficient. Moreover, SPAI and, especially, PSAI are more likely to construct
effective preconditioners. Motivated by these features, we propose an approach
to making SPAI and PSAI more practical for $Ax=b$ with $A$ irregular sparse.
We first split $A$ into a regular sparse $\tilde A$ and a matrix of low rank $s$.
Then exploiting the Sherman--Morrison--Woodbury formula, we transform $Ax=b$
into $s+1$ new linear systems with the same coefficient matrix $\tilde A$,
use SPAI and PSAI to compute sparse approximate inverses of $\tilde A$
efficiently and apply Krylov iterative methods to solve the preconditioned
linear systems. Theoretically, we consider the non-singularity
and conditioning of $\tilde A$ obtained from some important classes
of matrices. We show how to recover an approximate solution of $Ax=b$ from
those of the $s+1$ new systems and how to design reliable stopping criteria
for the $s+1$ systems to guarantee that the approximate solution of $Ax=b$
satisfies a desired accuracy. Given the fact that irregular sparse linear
systems are common in applications, this approach widely extends the
practicability of SPAI and PSAI. Numerical results demonstrate
the considerable superiority of our approach to the direct application of SPAI
and PSAI to $Ax=b$.
\end{abstract}

\begin{keywords}
Preconditioning, sparse approximate inverse, irregular sparse,
regular sparse, the Sherman--Morrison--Woodbury formula, F-norm minimization,
Krylov solver
\end{keywords}

\begin{AM}
65F10
\end{AM}
\pagestyle{myheadings}
\thispagestyle{plain}
\markboth{Z. JIA AND Q. ZHANG}{Making SPAI and PSAI Effective for Irregular
Sparse Systems}
\section{Introduction}\label{intro}

Krylov iterative solvers \cite{freund1992,saad2003} have been
very popular for the large sparse linear system
\begin{equation}\label{eq:Axb}
 Ax=b,
\end{equation}
where $A$ is a real nonsingular $n\times n$ matrix and $b$
is an $n$-dimensional real vector.  However, when $A$ has bad spectral property
or is ill conditioned, the solvers generally exhibit extremely slow
convergence and necessitate preconditioning techniques.
Sparse approximate inverse (SAI) preconditioning
aims to compute a preconditioner $M\approx A^{-1}$ directly so as to
improve the conditioning of (\ref{eq:Axb}) for the vast majority of problems,
and it is nowadays one class of important general-purpose preconditioning
techniques for Krylov solvers  \cite{benzi02,saad2003}. There are two typical
kinds of SAI preconditioning approaches. One of them constructs a factorized
sparse approximate inverse. An effective algorithm of this kind is the
approximate inverse (AINV) algorithm, which is derived from the
incomplete (bi)conjugation procedure \cite{benzi1996sparse,benzi98}.
The other kind is based on F-norm minimization
and is inherently parallelizable. This kind of preconditioners
are more robust and general. The approach constructs $M\approx A^{-1}$
by minimizing $\|AM-I\|_F$ for a specified pattern
of $M$ that is either prescribed in advance or determined adaptively,
where $\|\cdot\|_F$ denotes the F-norm of a matrix.
The most popular F-norm minimization-based SAI preconditioning technique may be
the adaptive sparse approximate inverse (SPAI) procedure~\cite{Grote1997},
which has been widely used. The adaptive power sparse
approximate inverse (PSAI) procedure with dropping,
advanced in \cite{Jia2009power} and called PSAI($tol$), is also an
effective F-norm minimization-based SAI preconditioning technique
and has been shown to be at least competitive with SPAI numerically
and can outperform SPAI for some practical problems. A hybrid version, i.e.,
the factorized approximate inverse (FSAI) preconditioning
based on F-norm minimization, has been introduced in \cite{KOLOTILINA1993}.
FSAI is generalized to block form, called BFSAI in \cite{janna2010block}.
An adaptive algorithm in \cite{janna2011adaptive}
is presented that generates automatically the nonzero pattern of a BFSAI preconditioner.
In addition, the idea of F-norm minimization is generalized in \cite{holland2005} by
introducing a sparse readily inverted {\em target} matrix $T$. $M$ is then computed
by minimizing $\|AM-T\|_{F,H}$ over a space of matrices with a prescribed sparsity
pattern, where $\|\cdot\|_{F,H}$ is the generalized F-norm defined
by $\|A\|^2_{F,H}=\langle A,A\rangle_{F,H}=trace(A^THA)$ with $H$ being some
symmetric positive definite matrix, the superscript $T$ denotes the
transpose of a matrix or vector. A good comparison of factorized SAI and
F-norm minimization based SAI preconditioning approaches can be
found in \cite{benzi1999comparative}. Spare approximate inverses have been
shown to provide effective smoothers for multigrid; see, e.g.,
\cite{Broker200261,broker:1396,sedlacek,Tang2000}.
For a comprehensive survey on preconditioning techniques,
we refer the reader to \cite{benzi02,ferronato}.


Throughout this paper, we will frequently use two keywords
``regular sparse" and ``irregular sparse" for a matrix. By regular sparse,
we, qualitatively and sensibly, mean that all the columns of the matrix
are sparse, and no column has much more nonzero entries
than the others. By irregular sparse, we mean that there
are some (relatively) dense columns, each of which has considerably more
nonzero entries than the other sparse columns.
We call such columns irregular and denote by $s$ the number of them.
Problem (\ref{eq:Axb}) with $A$ irregular sparse is quite common and
arises in semiconductor device problem,
power network problem, circuit simulation, optimization problem
and many others \cite{davis2011university}.
Quantitatively, we will declare a matrix irregular sparse
if it has at least one column that has $10p$ nonzero entries or more,
where $p$ is the average number of nonzero entries per column.
Under this definition, we investigate all the matrices in
the University of Florida sparse matrix collection \cite{davis2011university},
which contains 2649 matrices and of them 1978 are square.
We find that 682 out of these 1978 matrices are irregular sparse.
That is, 34\% of the matrices in the collection are irregular sparse.
In the collection, there are some social networks, citation networks,
and other graphs that are not typically viewed as linear systems.
They often have dense columns. But even if these matrices are removed,
there are 30\% irregular sparse  matrices, 464 out of 1554,
in the collection. So irregular sparse linear systems have a wide broad of
applications.\footnote{We thank Professor Davis,
one of the authors of \cite{davis2011university}, very much for providing us such a
very valuable data analysis, clearly showing that irregular sparse linear problems
are quite common.}

The possible success of any SAI preconditioning procedure is based on
the crucial assumption that $A$ has good sparse approximate inverses.
Under this assumption, throughout the paper we consider the case
that $A$ is irregular sparse. It is empirically observed
that good sparse approximate inverses of $A$ are irregular sparse too.
In the context, we are concerned with the adaptive SPAI and PSAI($tol$) procedures.
As is seen, since the number of nonzero entries in an individual column of
the final $M$ in SPAI is bounded by the maximum number of most profitable indices
per loop times the maximum loops, a column of $M$ may have not enough
nonzero entries. As a result, $M$ obtained by SPAI may not
approximate $A^{-1}$ well and thus may be ineffective for preconditioning.
Moreover, it can be justified that the SPAI algorithm
may be very costly to implement.
For PSAI($tol$), we may also suffer the unaffordable overhead from solving
some possibly large LS problems (\ref{lsprob}), although it is more likely to
construct effective preconditioners no matter whether $A$ is regular sparse or not.
Remarkably, it turns out that the situation mentioned above is improved
substantially if $A$ is regular sparse. With reasonable parameters,
SPAI and PSAI($tol$) are efficient. Furthermore, SPAI and, especially,
PSAI($tol$) are more likely to construct effective sparse approximate inverses.

It is well known \cite{benzi2000orderings} that the computational consumption,
stability and effectiveness of factorized SAI preconditioners are generally
sensitive to reorderings of $A$. Unfortunately,
reorderings do not help for SPAI and PSAI($tol$). The reason is that reorderings do
not change the irregularity of sparsity patterns of $A$, $A^{-1}$
and good sparse approximate inverses of $A$. Therefore, with reorderings used, SPAI
and PSAI($tol$) may still be very costly to implement, and SPAI may still be
ineffective for preconditioning. We refer the reader to \cite{benzi02} for the
relevant arguments about SPAI, which are valid for PSAI($tol$) as well.

Because of the above features, we naturally come up with the idea of
transforming the irregular sparse problem (\ref{eq:Axb})
into some regular sparse one(s), on which SPAI and PSAI($tol$) may work well.
It will appear that the Sherman--Morrison--Woodbury formula \cite{golub,Stewart1998}
provides us a powerful tool and can be used a key step towards our goal.
We present an approach to splitting $A$ into a regular sparse matrix $\tilde A$ and a
matrix of low rank $s$ and to transforming (\ref{eq:Axb}) into the $s+1$ new linear
systems with the same coefficient matrix $\tilde A$. By exploiting the
Sherman--Morrison--Woodbury formula, we can recover the solution of (\ref{eq:Axb})
from the ones of the $s+1$ systems directly.
We consider numerous practical issues on how to obtain a desired splitting of $A$,
how to define and compute an approximate solution of (\ref{eq:Axb}) via those of
the new systems, and how accurately we should solve the new systems, etc. A
remarkable merit of this approach is that SPAI and PSAI($tol$) are efficient to
construct possibly effective preconditioners for the new systems, making Krylov
solvers converge fast. The price we pay is to solve $s+1$ linear systems.
But a great bonus is that we only need to construct one effective sparse
approximate inverse $M$ efficiently for the $s+1$ systems.
The price is generally insignificant as it is typical that the construction
of an effective $M$ dominates the whole cost of Krylov iterations even
in a parallel computing environment \cite{barnard1999,benzi02,benzi1999comparative}.
As a matter of fact, due to inherent parallelizations
of SPAI and PSAI($tol$), SAI type precondtioners
are attractive for solving a sequence of linear systems with the
same coefficient matrix, as has been addressed in the literature, e.g.,
\cite{benzi98}. Therefore, given the fact that irregular sparse
linear systems are quite common in applications, our approach widely extends
the practicality of SPAI and PSAI($tol$).

The paper is organized as follows. In \S\ref{overview}, we review
SPAI and PSAI($tol$) procedures and shed light on the above-mentioned features for
$A$ irregular sparse and regular sparse, respectively. In \S\ref{approach}, we
describe our new approach for solving (\ref{eq:Axb}) with $A$ irregular sparse.
In \S\ref{issue} we consider numerous theoretical and practical issues,
establishing some results on the nonsingularity of $\tilde A$ obtained from certain
important and widely useful classes of matrices $A$ and drawing some claims
on the conditioning of $\tilde A$.
In \S\ref{numerexp}, we report numerical experiments to demonstrate the
superiority of our approach to SPAI and PSAI($tol$)
applied to (\ref{eq:Axb}) directly and the superiority of
PSAI($tol$) to SPAI for both irregular and regular sparse linear systems.
Finally, we conclude the paper in \S\ref{conclude}.

\section{The SPAI and PSAI($tol$) procedures}\label{overview}

In this section, we overview the SPAI and PSAI($tol$) procedures and
shed light on the facts that (i) SPAI and PSAI($tol$) may be very costly
and (ii) SPAI may not be effective for preconditioning if $A$ is irregular
sparse.

During loops each of SPAI and PSAI($tol$) solves a sequence of constrained
optimization problems of the form
\begin{equation}\label{miniz}
\min_{M\in\mathcal{M}}\|AM-I\|_F,
\end{equation}
where $\mathcal{M}$ is the set of matrices with a given sparsity pattern
$\mathcal{S}$. Denote by $\mathcal{M}_k$ the set of $n$-dimensional vectors
whose sparsity pattern is $\mathcal{S}_k
=\{i|(i,k)\in\mathcal{S}\}$. Then (\ref{miniz}) is decoupled into $n$
independent constrained least squares (LS) problems
\begin{equation}\label{subprob}
 \min_{m_k\in\mathcal{M}_k}\|Am_k-e_k\|, \quad k=1,2,\ldots,n
\end{equation}
with $e_k$ the $k$-th column of the $n\times n$ identity matrix $I$. Here and
hereafter, the norm $\|\cdot\|$ denotes the vector 2-norm or the matrix spectral
norm. For each $k$, let $\hat m_k=m_k(\mathcal S_k)$, ${\cal L}_k$ be the set
of indices of nonzero rows of $A(:,{\cal S}_k)$, $\hat A_k=
A({\cal L}_k,{\cal S}_k)$ and $\hat e_k=e_k({\cal L}_k)$.
Then (\ref{subprob}) is reduced to the smaller unconstrained LS problems
\begin{equation}\label{lsprob}
 \min_{\hat m_k}\|\hat A_k\hat m_k-\hat e_k\|, \quad k=1,2,\ldots,n,
\end{equation}
which can be solved by the QR decomposition in parallel.
SPAI and PSAI($tol$) determine the sparsity pattern
$\mathcal{S}_k$ dynamically: starting with a simple initial pattern,
say the pattern of $e_k$, $\mathcal{S}_k$ is augmented or adjusted adaptively
until the residual norm $\|Am_k-e_k\|$ falls below a given tolerance or
the maximum number of augmentations is reached.
The distinction between SPAI and PSAI($tol$) lies in the way that $\mathcal S_k$
is augmented or adjusted. As is clear from \S\ref{spai} and \S\ref{psai},
the already existing positions of nonzero entries in $m_k$
for SPAI are retained in subsequent loops, and at each loop a few
most profitable indices added, which are selected from a certain new set generated
at the current loop. PSAI($tol$) aims to adaptively drop the entries
whose sizes are below certain tolerances and only retain the remaining large
ones during the determination of $M$, and $\mathcal S_k$ is adjusted dynamically
by not only absorbing new members but also discarding the positions
where the entries of $m_k$ become small during loops.
In other words, at each loop PSAI($tol$) determines the positions of
entries of large magnitude in a globally optimal sense,
while SPAI does the job locally by adding a few ones from a local pattern generated
at the current loop. Therefore, PSAI($tol$) may capture a more effective
sparsity pattern of $A^{-1}$ than SPAI.

\subsection{The SPAI procedure}\label{spai}

Denote by $\mathcal S_k^{(l)}$ the sparsity pattern of $m_k$ after $l$
loops of augmentation starting with a given initial pattern $\mathcal S_k^{(0)}$,
and by $\mathcal L_k^{(l)}$ the set of indices of nonzero rows of
$A(:,\mathcal S_k^{(l)})$. Let $\hat A_k=A(\mathcal L_k^{(l)},\mathcal S_k^{(l)}),
\hat e_k=e_k(\mathcal L_k^{(l)})$, and $\hat m_k$ be the solution of (\ref{lsprob}).
Then the residual of (\ref{subprob}) is
$$
r_k = A(:,\mathcal S_k^{(l)})\hat m_k-e_k.
$$
For $r_k\neq 0$, denote by $\mathcal L$ the set of indices $l$ for which $r_k(l)\neq 0$,
and by $\mathcal N$ the set of indices of nonzero columns of $A(\mathcal L, :)$.
Then
\begin{equation}\label{candi}
\tilde{\mathcal J} = \mathcal N\backslash\mathcal S_k^{(l)}
\end{equation}
constitutes the new candidates for augmenting $\mathcal S_k^{(l)}$ in the next loop.
Grote and Huckle~\cite{Grote1997} suggest to select several most profitable indices
from $\tilde{\mathcal J}$ and augment them to $\mathcal S_k^{(l)}$ to obtain
a new sparsity pattern $\mathcal S_k^{(l+1)}$ of $m_k$. They do this as follows:
for each $j\in\tilde{\mathcal J}$, consider the one-dimensional minimization problem
\begin{equation}\label{ls1d}
\min_{\mu_j}\|r_k+\mu_jAe_j\|,
\end{equation}
whose solution is
\begin{equation}\label{muj}
 \mu_j=-\frac{r_k^TAe_j}{\|Ae_j\|^2}.
\end{equation}
The 2-norm $\rho_j$ of the new residual $r_k+\mu_jAe_j$ satisfies
\begin{equation}\label{rhoj}
 \rho_j^2=\|r_k\|^2-\frac{(r_k^TAe_j)^2}{\|Ae_j\|^2.}
\end{equation}
The set $\tilde{\mathcal S}_k^{(l)}$ of the most profitable indices $j$
consists of those associated with a few, say, 1 to 5, smallest $\rho_j$
and is added to $\mathcal S_k^{(l)}$ to obtain $\mathcal S_k^{(l+1)}$.
Update $\mathcal L_k^{(l)}$ to get $\mathcal L_k^{(l+1)}$
by adding the set $\tilde{\mathcal L}_k^{(l)}$ of indices of new
nonzero rows corresponding to $\tilde{S}_k^{(l)}$.
The new augmented LS problem (\ref{lsprob}) is solved by updating $\hat m_k$
instead of resolving it. Proceed in such a way until
$\|Am_k-e_k\|\leq\delta$ or $l$ attains the prescribed maximum
$l_{\max}$ of loops, where $\delta$ is a given mildly small tolerance, say
$0.1\sim 0.4$.

{\em Remark 1.}
Let us consider the computational complexity of SPAI. For $A$ regular sparse,
it is straightforward to verify that $r_k=Am_k-e_k$ is also sparse, and
both $\mathcal L$ and $\tilde{\mathcal J}$ have only a few elements.
As a result, the cardinal number of $\mathcal S_k^{(l)}$ is small,
so is the order of $\hat A_k$ in (\ref{lsprob}). Therefore, it is cheap to
determine the set $\tilde{\mathcal S}_k^{(l)}$ of the most profitable
indices and solve (\ref{lsprob}).
However, the situation deteriorates severely when $A$ is irregular sparse.
For example, assume that the $k$-th column $a_k$ of $A$ is relatively
dense, and denote $A=(a_{ij})$. If $a_{kk}\neq 0$ and we take
$\mathcal S_k^{(0)}=\{k\}$, the residual $r_k=m_k(k)a_k-e_k$ is as dense as $a_k$
in the first loop of SPAI, causing that $\mathcal L$, ${\mathcal N}$
and $\tilde{\mathcal J}$ have big cardinal numbers. Keep in mind that
$\mathcal S_k^{(l)}$ has a very small cardinal number for all $l$
as both the maximum of loops and the number of most
profitable indices are small.
Then it is easily checked that ${\mathcal L}$, ${\mathcal N}$
and $\tilde{\mathcal J}$ always have very big cardinal numbers
in subsequent loops. As a consequence, suppose that $a_k$ is fully dense,
at each loop we have to compute almost $n$ numbers $\rho_j$, order them and
select the most profitable indices in $\tilde{\mathcal J}$. Generally, at some
loop, once a nonzero index of $m_k$ corresponds to an irregular column of $A$,
then the resulting residual $r_k$ must be dense, generating big cardinalities
of ${\mathcal L},\ {\mathcal N}$ and $\tilde{\mathcal J}$ at the current
loop and in subsequent loops. So SPAI may be very costly to implement for $A$
irregular sparse.
\smallskip

{\em Remark 2}. SPAI provides a right preconditioner. One should notice that $A^T$ may
not be so when $A$ is irregular sparse. Naturally, one might apply SPAI to $A^T$
and computes a left preconditioner $M$, whose transpose $M^T$ is a right
preconditioner. However, SPAI may still be very costly to
implement in this way, and in fact it may be more costly: Suppose
that the $k$-th row $a_k^T$ of $A^T$ is fully dense. Then when computing the $j$-th
column $m_j$ of $M$, $j=1,2,\ldots,n$, since $a_k^T$ is dense, we generally have
$a_k^Tm_j\not=0$. This means that the $k$-th component of
the residual $r_j=A^Tm_j-e_j$ is nonzero and thus $k\in{\mathcal L}$ at each loop.
As a result, when computing each column $m_j$ of $M$, the cardinalities of
$\mathcal N$ and $\tilde{\mathcal J}$ are about $n$
at each loop, and we have to compute almost $n$ numbers $\rho$, order them and
select a few most profitable indices at each loop.  Such kind of feature is the
same for all $j=1,2,\ldots,n$. Consequently, the situation is now more severe than SPAI
working on $A$ directly, and generally it is more costly to apply SPAI to $A^T$
when $A$ is irregular sparse.
\smallskip

{\em Remark 3}. For the irregular sparse $A$, suppose that
it has good sparse approximate inverses. Then they are typically
irregular sparse too. Suppose the $k$-th column of a good sparse approximate
inverse is irregular. The description of SPAI shows that if $a_k$ is irregular,
i.e., relatively dense, then the sets
$\tilde{\mathcal J}$ in (\ref{candi}) have big cardinal numbers during loops.
Nevertheless, SPAI simply takes the set $\tilde{\mathcal S}_k^{(l)}$ to be
{\em only a few} most profitable indices from $\tilde{\mathcal J}$
at each loop. If the cardinality of $\tilde{\mathcal S}_k^{(l)}$ is fixed small,
say 5, the default value as suggested and used in \cite{barnard1999,barnard,Grote1997},
then the $k$-th column of $M$ is sparse and may not approximate the
$k$-th column of $A^{-1}$ well unless the loops $l_{\max}$ is
large enough. Since the number of nonzero entries in an individual column of
the final $M$ in SPAI is bounded by the maximum number of most profitable
indices per loop times the maximum loops $l_{\max}$, which is fairly
small, say 20 (the default value is 5 in \cite{barnard1999,barnard}),
a column of $M$ may have not enough nonzero entries.
As a result, $M$ obtained by SPAI may not approximate $A^{-1}$ well
and is thus ineffective for preconditioning.
\smallskip

{\em Remark 4}. We point out that there are pathological regular sparse
matrices whose good sparse approximate inverses are irregular sparse.
Thus it is possible, and in fact quite common in practice, that a regular
sparse matrix causes problems for SPAI. This is a case for sparse
matrices arising from finite differences, volumes or elements of some
PDEs. A two-level sparse approximate inverse
preconditioning proposed by Chen \cite{chen2006} attempts to handle
this class of problems, in which SPAI is used to first compute a right
preconditioner $M_1$ of $A$ and then compute a left preconditioner
$M_2$ of the sparsification of $AM_1$. Numerically, this procedure
can be effective for preconditioning a number of problems,
but it lacks theoretical justification and may encounter difficulty since
the sparsification of $AM_1$ is crucial but it can only be done empirically.

\subsection{The PSAI($tol$) procedure}\label{psai}

We first review the basic PSAI (BPSAI) procedure \cite{Jia2009power}. From the
Cayley--Hamilton theorem, $A^{-1}$ can be expressed as a matrix polynomial of $A$
of degree $d-1$ with $d\leq n$:
\begin{equation}
 A^{-1}=\sum_{i=0}^{d-1}c_iA^i，
\end{equation}
with $A^0=I$ and $c_i$ $(i=0,1,\ldots,d-1)$ being certain constants.
Denote $\mathcal P(\cdot)$ by the sparsity pattern of a matrix or vector,
and write the matrix $|A|=(|a_{ij}|)$.
It is obvious that $\mathcal P(A^{-1})\subseteq\mathcal P((I+|A|)^{d-1})$.
For a given small positive integer $l_{\max}<d$, the pattern
$\mathcal{S}$ of a sparse approximate inverse $M$ is taken as a subset of
$\mathcal P((I+|A|)^{l_{\max}})$ in BPSAI.
Therefore, the pattern $\mathcal S_k$ of the $k$-th column $m_k$ of $M$
is a subset of $\bigcup\limits_{l=0}^{l_{\max}} \mathcal{P}(|A|^le_k)$ since
$$
\mathcal P((I+|A|)^{l_{\max}})=\bigcup_{l=0}^{l_{\max}}\mathcal{P}(|A|^l).
$$

The adjustment of $\mathcal S_k$ for $1 \le k \le n$ proceeds dynamically
as follows. For $l=0,1,\ldots,l_{\max}$, denote by $\mathcal S_k^{(l)}$
the sparsity pattern of $m_k$ at the current loop $l$ and by $\mathcal L_k^{(l)}$
the set of indices of nonzero rows of $A(:,\mathcal S_k^{(l)})$. Set
$a_k^{(l)}=A^le_k$. Then  $a_k^{(l+1)}=Aa_k^{(l)}$ with $a_k^{(0)}=e_k$.
The sparsity pattern $\mathcal S_k^{(l+1)}$ is updated as
$\mathcal S_k^{(l+1)}=\mathcal P(a_k^{(l+1)})\cup\mathcal S_k^{(l)}$.
In the next loop we solve the augmented LS problem
$$
\min\|A(\mathcal L_k^{(l+1)},\mathcal S_k^{(l+1)})m_k(\mathcal S_k^{(l+1)})-
e_k(\mathcal L_k^{(l+1)})\|
$$
with $\mathcal L_k^{(l+1)}=\mathcal L_k^{(l)}\cup \tilde{\mathcal L}_k^{(l)}$,
where $\tilde{\mathcal L}_k^{(l)}$ is the set of indices of new
nonzero rows corresponding to the set $\mathcal P(a_k^{(l+1)})
\backslash\mathcal S_k^{(l)}$.
This problem corresponds to the small LS problem (\ref{lsprob}),
whose solution can be updated from $m_k(\mathcal S_k^{(l)})$ efficiently.
Proceed in such a way until $\|Am_k-e_k\|\leq\delta$ or $l>l_{\max}$.

It has been proved in \cite[Theorem 1]{Jia2009power} that for
$\mathcal S_k^{(0)}=\{k\}$, as $l_{\max}$ increases, $M$ obtained by
BPSAI may become increasingly denser
quickly once one column in $A$ is irregular sparse. In order to control the
sparsity of $M$ and construct an effective preconditioner, some reasonable
dropping strategies should be used. Two practical PSAI($l\!f\!ill$)
and PSAI($tol$) algorithms have been proposed in \cite{Jia2009power}.
PSAI($l\!f\!ill$) aims to retain at most $l\!f\!ill$ entries of large
magnitude in $m_k$. To be more flexible, $l\!f\!ill$ may vary with $k$.
Since an effective sparsity pattern of $A^{-1}$ is generally
unknown in advance, it is difficult to prescribe a reasonable $l\!f\!ill$.
This is a shortcoming similar to SPAI. In contrast, for the newly computed
$m_k$ at loop $l$, PSAI($tol$) drops those entries of small magnitude below
certain tolerances $tol$ and retains only the ones of large magnitude.
Therefore, PSAI($tol$) is more reasonable and reliable to capture an
effective sparsity pattern of $A^{-1}$ and determines
the corresponding entries. A central issue is the selection
of dropping tolerance $tol$. This issue is mathematically nontrivial,
and $tol$ has strong effects on the effectiveness of PSAI($tol$) and many other SAI
preconditioning procedures. The authors \cite{Jia2012robust}
have proposed effective and robust dropping criteria for
PSAI($tol$) and all the static F-norm minimization-based SAI preconditioning
procedures. For PSAI($tol$), it is shown that, at loop $l\leq l_{\max}$,
a nonzero entry $m_{jk}$ is dropped for $1\leq j\leq n$ if
\begin{equation}\label{tolk}
|m_{jk}|\leq tol_k=\frac{\delta}{nnz(m_k)\|A\|_1},\ k=1,2,\ldots,n,
\end{equation}
where $nnz(\cdot)$ is the number of nonzero entries in a vector or matrix,
and $\delta<0.5$ is the stopping tolerance for SPAI and BPSAI.
We comment that $m_k$ in (\ref{tolk}) is the
newly computed one at loop $l$. This criterion makes $M$ as sparse
as possible and meanwhile has similar
preconditioning quality to the possibly much denser one
obtained by BPSAI \cite{Jia2012robust}. More precisely,
for the final $M$ obtained by PSAI($tol$), if (\ref{tolk}) is used then
the residual norm $\|Am_k-e_k\|\leq2\delta,\ k=1,2,\ldots,n$ when
the residual norm of each column of the preconditioner obtained by
BPSAI falls below $\delta$.
\smallskip

{\em Remark.} If $A$ is regular sparse, it is direct to justify
that the size of $\hat A_k$ in (\ref{lsprob}) is small
for a small $l_{\max}$ and it is cheap to solve (\ref{lsprob}).
However, the situation changes sharply for $A$ irregular sparse.
Suppose that the $k$-th column $a_k$ of $A$ is relatively dense and
and take $\mathcal S_k^{(0)}=\{k\}$.
Then the resulting $m_k$ is also relatively dense since
$\mathcal S_k^{(1)}=\mathcal P(a_k)\cup\{k\}=\mathcal P(a_k)$. Therefore,
(\ref{lsprob}) is a relatively large LS problem. Since 
$\mathcal S_k^{(l+1)}=\mathcal P(a_k^{(l+1)})\cup\mathcal S_k^{(l)}$, 
(\ref{lsprob}) is always a large LS problem at each subsequent 
loop $l\leq l_{\max}$. Therefore, if $a_k$ is dense,
PSAI($tol$) is very costly and impractical.

\section{Transformation of (\ref{eq:Axb}) into regular sparse linear systems}
\label{approach}

As we have seen, SPAI and PSAI($tol$) may be very costly to implement for $A$
irregular sparse, and $M$ obtained by SPAI may be ineffective for preconditioning
(\ref{eq:Axb}). In this section, we attempt to transform (\ref{eq:Axb}) into
some regular sparse ones, making SPAI and PSAI($tol$) more practical to construct
possibly effective $M$. It turns out that the following Sherman--Morrison--Woodbury
formula (see \cite[p. 50]{golub} and \cite[p. 330]{Stewart1998})
provides us a powerful tool for our purpose.

\begin{theorem}\label{thm1}
Let $U, V\in \mathbb{R}^{n\times s}$ with $s\leq n$.
If $A$ is nonsingular, then $A-UV^T$ is nonsingular if and only if
$I-V^TA^{-1}U$ is nonsingular. Furthermore,
\begin{equation}\label{Sherman--Morrison--Woodbury1}
 (A-UV^T)^{-1} = A^{-1}+A^{-1}U(I-V^TA^{-1}U)^{-1}V^TA^{-1}.
\end{equation}
\end{theorem}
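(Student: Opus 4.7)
The plan is to prove the two assertions in the theorem separately. For the nonsingularity equivalence I would use a determinantal identity, and for the explicit inverse formula I would verify by direct multiplication that the right-hand side is a two-sided inverse of $A-UV^T$. Both ingredients are short, and although the formula verification alone already gives one direction of the equivalence, separating the arguments makes the logic cleanest.

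First, for the nonsingularity claim, I would factor $A-UV^T = A(I_n - A^{-1}UV^T)$, so that, since $A$ is nonsingular by hypothesis, $A-UV^T$ is nonsingular iff $I_n - A^{-1}UV^T$ is. Setting $X = A^{-1}U \in \mathbb{R}^{n\times s}$ and $Y = V^T \in \mathbb{R}^{s\times n}$, the Sylvester (Weinstein--Aronszajn) determinant identity $\det(I_n - XY) = \det(I_s - YX)$ then yields $\det(I_n - A^{-1}UV^T) = \det(I_s - V^T A^{-1} U)$, so $A-UV^T$ is nonsingular iff $I - V^T A^{-1} U$ is. The same conclusion falls out of a Schur-complement computation of the determinant of the $2\times 2$ block matrix with blocks $A$, $U$, $V^T$, $I_s$, by pivoting alternately on $A$ and on $I_s$; this avoids invoking the Sylvester identity by name.

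Second, for the inverse formula, assume $I - V^T A^{-1} U$ is nonsingular so that the right-hand side of (\ref{Sherman--Morrison--Woodbury1}) is well defined. I would left-multiply it by $A - UV^T$, distribute the product, and collect the two middle terms. The key observation is that they combine as $U\bigl[(I - V^T A^{-1} U)(I - V^T A^{-1} U)^{-1}\bigr]V^T A^{-1} = U V^T A^{-1}$, which exactly cancels the $-UV^T A^{-1}$ coming from $(A - UV^T)A^{-1}$; what remains is $I$. Since $A - UV^T$ is a square matrix admitting a right inverse, it is invertible and that right inverse is in fact its inverse, establishing (\ref{Sherman--Morrison--Woodbury1}). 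The main obstacle is purely clerical: one must keep the noncommuting factors $U$, $V^T$, and $A^{-1}$ in their correct order during the telescoping step, but no conceptual difficulty arises beyond that.
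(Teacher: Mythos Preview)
Your proof is correct and follows a standard route: the Sylvester determinant identity (or equivalently a Schur-complement argument on the block matrix $\left(\begin{smallmatrix} A & U \\ V^T & I_s \end{smallmatrix}\right)$) for the nonsingularity equivalence, and a direct multiplicative verification for the inverse formula. The telescoping step you describe is exactly right.

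The paper, however, does not prove this theorem at all. It is quoted as the classical Sherman--Morrison--Woodbury formula with references to Golub--Van~Loan and Stewart, and is used as an off-the-shelf tool. So there is no ``paper's proof'' to compare against; you have supplied a clean self-contained argument where the authors simply cite the literature. Your approach is one of the standard textbook proofs and would be perfectly acceptable if a proof were required.
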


The formula is typically of interest for $s\ll n$, and it is called the
Sherman--Morrison formula when $s=1$. For a good survey on the history and
applications, we refer the reader to \cite{hager1989}.

For our purpose, assume that the $j_1,j_2,\ldots,j_s$-th columns of
$A$ are irregular and the remaining $n-s$ ones are sparse. Denote
by $A_{dc}=(a_{j_1},a_{j_2},\ldots,a_{j_s})$ the matrix consisting of the $s$
irregular columns of $A$, and by
$\tilde{A}_{dc}=(\tilde{a}_{j_1},\tilde{a}_{j_2},\ldots,\tilde{a}_{j_s})$
the sparsification of $A_{dc}$ that drops some of its nonzero entries,
so that each column of $\tilde{A}_{dc}$ is as sparse as the other $n-s$ columns
of $A$. Define $U = A_{dc} - \tilde{A}_{dc}=(u_1,u_2,\ldots,u_2)$.
Then the nonzero entries of $U$ are just those dropped ones of $A_{dc}$.
Let $\tilde A$ be the matrix that is obtained from $A$ by replacing its dense
columns $a_{j_i}$ by the sparse vectors $\tilde{a}_{j_i}$, $i=1,2,\ldots,s$.
Then $\tilde A$ is regular sparse and satisfies
\begin{equation}\label{split}
A=\tilde A+UV^T,
\end{equation}
where
$V = (e_{j_1},e_{j_2},\ldots,e_{j_s})$ with $e_{j_i}$ the $j_i$-th column of
the $n\times n$ identity matrix $I ,\ i=1,2,\ldots,s$.
Assume that $\tilde A$ is nonsingular. Then it follows from
(\ref{Sherman--Morrison--Woodbury1}) that
\begin{equation}\label{solution}
A^{-1}=\tilde A^{-1}-\tilde A^{-1}U(I+V^T\tilde
A^{-1}U)^{-1}V^T\tilde A^{-1}.
\end{equation}
Therefore, the solution of (\ref{eq:Axb}) is
\begin{equation}\label{update}
x=A^{-1}b=\tilde A^{-1}b-(\tilde A^{-1}U)(I+V^T(\tilde A^{-1}U))^{-1}
(V^T\tilde A^{-1}b).
\end{equation}
This amounts to solving a new regular sparse linear system
\begin{equation}\label{regprob}
\tilde A y=b
\end{equation}
and the other $s$ regular sparse linear systems
\begin{equation}\label{auxi}
\tilde{A} w_j=u_j, \ j=1,2,\ldots,s.
\end{equation}
If the exact solutions to (\ref{regprob}) and (\ref{auxi}) were
available, we would get the solution $x$ of (\ref{eq:Axb})
from (\ref{update}) by solving the small $s\times s$ linear system with
the coefficient matrix $I+V^T(\tilde A^{-1}U)$ and the right-hand side
$V^T\tilde A^{-1}b$.

We can summarize the above approach as Procedure *.

\begin{algorithm}[H]
\caption*{\textbf{Procedure *: Solving the irregular sparse linear system
(\ref{eq:Axb})}}
\begin{algorithmic}[1]
\STATE Find $s$ and $A_{dc}$, and sparsify $A_{dc}$ to get $\tilde{A}_{dc}$.
Define $U=A_{dc}-\tilde{A}_{dc}$ and the regular sparse matrix
$\tilde A=A-UV^T$, where $V = (e_{j_1},e_{j_2},\ldots,e_{j_s})$.

\STATE Solve $s+1$ linear systems (\ref{regprob}) and (\ref{auxi}) for $y$ and
$w_1,w_2,\ldots,w_s$, respectively.

\STATE Let $W=(w_1,w_2,\ldots,w_s)=\tilde A^{-1}U$ and compute
the solution $x$ of $Ax=b$ by
\begin{equation}\label{update1}
x=A^{-1}b=y-W(I+V^TW)^{-1}(V^Ty).
\end{equation}
\end{algorithmic}
\end{algorithm}

For regular sparse systems (\ref{regprob}) and (\ref{auxi}), we suppose that
only iterative solvers are viable in our context. Now, a big and direct reward is
that SPAI and PSAI($tol$) can be implemented much more efficiently
to construct preconditioners for the $s+1$ regular sparse systems with the same
regular sparse coefficient matrix $\tilde A$. Furthermore, compared with the
irregular sparse case, SPAI is more likely to construct
an effective preconditioner now.

\section{Theoretical and practical considerations}\label{issue}

When iterative solvers are used, recovering an \textit{approximate} solution of
(\ref{eq:Axb}) via those of (\ref{regprob}) and (\ref{auxi}) is quite involved
and is not as simple as Procedure * indicates, in which the exact $y$ and $W$
are assumed. In order to use Procedure * to develop a practical iterative solver
for (\ref{eq:Axb}), we first need to handle several theoretical and practical issues.

The first issue is about the quantitative meaning of irregular columns, by which
we define $A_{dc}$. Obviously, like sparsity itself and many other
quantities in numerical analysis, it appears impossible to give a
precise definition of it. In fact, it is also unnecessary to do so. In our
experiments, we empirically find that the threshold $10p$ is a good choice,
where $p=\lfloor nnz(A)/n\rfloor$ is the average number of nonzero entries
per column of $A$. If the number of nonzero entries in a column exceeds $10p$,
then we mark it as an irregular column. Based on this criterion,
we determine all the irregular columns of $A$ and the number $s$ of them.
Numerically, we have found that other thresholds ranging from $8p$ to $15p$
work well and exhibit no essential difference. So our approach is insensitive to
thresholds.

The second issue is which nonzero entries in $A_{dc}$ should be dropped to
get $\tilde{A}_{dc}$ and generate $\tilde A$. In principle, the number
$\hat p$ of nonzero entries in each column of $\tilde{A}_{dc}$ should be
comparable to $p$. For the choice of $\hat p$, to be unique, we simply propose
taking $\hat p=p$. Given such $\hat p$, there may be many dropping ways.
Two obvious approaches can be adopted. The first approach is to
retain the diagonal and the $p-1$ nonzero entries nearest to the
diagonal in each column of $A_{dc}$. The second approach is to retain the
diagonal and the other $p-1$ largest entries in magnitude of each column of
$A_{dc}$. Numerically, two approaches have exhibited very similar behavior.
Therefore, we will take the first approach and report the results obtained.

The third issue is on the non-singularity of $\tilde A$, which is
crucial both in theory and practice. First of all, we present the following results.

\begin{theorem}\label{thm3}
$\tilde A$ constructed above is nonsingular for the following classes of matrices:
\begin{romannum}
\item $A$ is strictly (row or column) diagonally dominant.

\item $A$ is irreducibly (row or column) diagonally dominant.

\item $A$ is an $M$-matrix.
\end{romannum}
\end{theorem}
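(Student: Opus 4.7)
\textbf{Proof plan for Theorem~\ref{thm3}.} The construction changes only the irregular columns $j_1,\ldots,j_s$ of $A$, retaining their diagonal entries and a subset of their off-diagonal nonzeros. Writing $A=D+N$ and $\tilde A=D+\tilde N$ with $D=\mathrm{diag}(A)$ nonsingular and $N$, $\tilde N$ the off-diagonal parts, we then share the same $D$ and have the entrywise bound $|\tilde N|\le|N|$. The plan is to handle all three cases through a single reduction: prove $\rho(|D^{-1}N|)<1$ for the original $A$, invoke the Perron--Frobenius monotonicity of the spectral radius on nonnegative matrices to obtain $\rho(|D^{-1}\tilde N|)\le\rho(|D^{-1}N|)<1$, and conclude that $\tilde A=D(I+D^{-1}\tilde N)$ is nonsingular because no eigenvalue of $D^{-1}\tilde N$ can equal $-1$. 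For the column-dominance flavours, I would apply the same reasoning to $A^T$ and $\tilde A^T$.

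Case (i) is immediate from Gershgorin: strict row (or column) diagonal dominance makes every row (or column) sum of the nonnegative matrix $|D^{-1}N|$ (respectively $|ND^{-1}|$) strictly less than $1$, so $\rho(|D^{-1}N|)<1$.

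Case (ii) is the main obstacle and the reason for preferring a spectral formulation over a direct Taussky-style argument. The difficulty is that dropping off-diagonal nonzeros can break the strong connectivity of the digraph of $A$, so $\tilde A$ itself need not be irreducibly diagonally dominant, ruling out a direct application of Taussky's theorem to $\tilde A$. Instead I would work with $A$, where $|D^{-1}N|$ is nonnegative and irreducible with all row sums $\le 1$ and at least one row sum strictly less than $1$. Were the Perron root equal to $1$, evaluating the eigenequation at an index where the positive Perron eigenvector attains its maximum and propagating the resulting equalities along edges of the strongly connected graph would force the eigenvector to be constant everywhere, contradicting the strictly dominant row. This yields $\rho(|D^{-1}N|)<1$, after which the monotonicity step closes the argument with no graph-theoretic hypothesis on $\tilde A$ at all.

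Case (iii) follows from the standard characterization that an $M$-matrix satisfies $D>0$, $N\le 0$, and $\rho(D^{-1}|N|)<1$. The sparsification preserves the sign of each surviving entry, so $\tilde A$ still has positive diagonal and nonpositive off-diagonal; componentwise $0\le D^{-1}|\tilde N|\le D^{-1}|N|$, and monotonicity of the Perron root gives $\rho(D^{-1}|\tilde N|)<1$. Writing $\tilde A=D(I-D^{-1}|\tilde N|)$ shows that $\tilde A$ is not merely nonsingular but is itself an $M$-matrix, a bonus that will be useful in \S\ref{issue} when discussing the conditioning of $\tilde A$. Thus the whole theorem reduces to the single strict spectral-radius inequality $\rho(|D^{-1}N|)<1$, and verifying it in the irreducibly diagonally dominant case is the delicate step.
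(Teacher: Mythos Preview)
Your argument is correct and takes a genuinely different route from the paper. The paper treats the three cases separately and directly: for (i) it simply observes that dropping off-diagonal entries preserves strict diagonal dominance; for (iii) it quotes Varga's theorem that zeroing off-diagonals of a nonsingular $M$-matrix again yields a nonsingular $M$-matrix; and for (ii) it splits according to whether $\tilde A$ remains irreducible, and in the reducible case permutes $\tilde A$ to block upper triangular form and argues that each irreducible diagonal block inherits diagonal dominance together with at least one strictly dominant row (or column) from the corresponding principal submatrix of $A$, using the fact that the off-diagonal blocks of the permuted $A$ cannot all vanish.

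Your unified reduction via $\rho(|D^{-1}N|)<1$ and Perron--Frobenius monotonicity under $|\tilde N|\le |N|$ is cleaner: once the strict spectral-radius bound is established for $A$, no structural hypothesis on $\tilde A$ is ever needed, which neatly sidesteps the graph-theoretic case analysis the paper carries out in (ii) (and the implicit induction over the Frobenius normal form that a fully rigorous version of that argument would require). The paper's proof, on the other hand, is more elementary in (i) and (iii), needing nothing beyond definitions and a citation. Both approaches yield the extra conclusion that $\tilde A$ is again an $M$-matrix in case (iii).
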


\begin{proof}
(i). If $A$ is strictly (row or column) diagonally dominant,
$\tilde A$ is so too since it removes some off-diagonal nonzero
entries of $A$. Therefore, $\tilde A$ is nonsingular \cite[p. 23]{varga}.

(ii). For $A$ irreducibly (row or column) diagonally dominant, $\tilde A$ is
either irreducible or reducible. If $\tilde A$
is irreducible, then it must be irreducibly (row or column) diagonally
dominant. So $\tilde A$ is nonsingular \cite[p. 23]{varga}.

If $\tilde A$ is reducible, without loss of generality  we suppose that there
is a permutation matrix $P$ such that
$$
P\tilde A P^T=\left(\begin{array}{cc}
\tilde A_{11} & \tilde A_{12}\\
0 &\tilde A_{22}
\end{array}\right),
$$
where $\tilde A_{11}$ and $\tilde A_{22}$ are irreducibly square matrices.
Since $A$ is irreducibly (row or column) diagonally dominant, $PAP^T$
is so too. Partition
$$
PA P^T=\left(\begin{array}{cc}
A_{11} & A_{12}\\
A_{21} & A_{22}
\end{array}\right)
$$
conformingly. Then each of $A_{12}$ and $A_{21}$ must have nonzero entries;
otherwise, $A$ is reducible. So $A_{11}$ and $A_{22}$ must be (row or column)
diagonally dominant, and at least one row or column in each of them is
strictly diagonally dominant. Note that all the nonzero entries
of $P\tilde AP^T$ are the same as the corresponding ones of $PAP^T$.
Therefore, $\tilde A_{11}$ and $\tilde A_{22}$
are (row or column) diagonally dominant, and at least one row or column
in each of them is strictly diagonally dominant. So,
both $\tilde A_{11}$ and $\tilde A_{22}$ are (row or column) diagonally
dominant. Since $\tilde A_{11}$ and $\tilde A_{22}$ are irreducible,
both of them are nonsingular \cite[p. 23]{varga}, which means that
$P\tilde AP^T$ is nonsingular, so is $\tilde A$.

(iii). By the definition in \cite[p. 61]{varga}, an $M$-matrix assumes its non-singularity.
Theorem 3.25 of \cite[p. 91]{varga} states that any matrix $C$
obtained from the nonsingular $M$-matrix $A$ by setting certain off-diagonal
entries of $A$ to zero is also a nonsingular $M$-matrix. Since our
$\tilde A$ is just such a $C$, it is nonsingular.
\end{proof}

Three classes of matrices in the theorem have a wide broad of applications,
e.g., discretizations of second-order ODEs and elliptic PDEs, quantum chemistry,
information theory, stochastic process, systems theory, networks and modern
economics, to name only a few. Strictly row diagonally dominant matrices are
a class of special $H$-matrices; see \cite[Theorem 3.27, p. 92]{varga}.
Actually, the first two classes of matrices in the theorem can be extended to more
general forms, as the following corollary states.

\begin{corollary}\label{cor1}
$\tilde A$ is nonsingular for the following matrices:
\begin{romannum}
\item There exists a nonsingular
diagonal matrix $D$ for which $AD$ or $DA$ is strictly row or column
diagonally dominant, respectively.

\item There exist permutation matrices
$P$ and $Q$ for which $PAQ$ is strictly (row or column) diagonally
dominant.

\item There are a nonsingular diagonal matrix $D$ and permutation matrices
$P$ and $Q$ for which $(PAQ)D$ or $D(PAQ)$ is strictly row or column
diagonally dominant.

\item The irreducible analogues of the matrices in (i)--(iii).

\item $A$ is a H-matrix.
\end{romannum}
\end{corollary}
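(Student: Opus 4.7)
The plan is to reduce each part of Corollary~\ref{cor1} to the appropriate part of Theorem~\ref{thm3} by exploiting two structural invariants of the splitting $A=\tilde A+UV^T$ constructed in \S\ref{approach}. First, $\tilde A$ coincides with $A$ on the diagonal and in every regular column, and differs from $A$ only by off-diagonal entries located in the $s$ irregular columns. Second, multiplication by a nonsingular diagonal matrix $D$ (from either side) and by permutation matrices preserves the zero pattern of any matrix, merely rescaling or relabeling its entries. These two facts let me ``transfer'' the dropping operation through scalings and permutations and read off the non-singularity of $\tilde A$ from that of an appropriately transformed matrix.

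For (i), suppose $AD$ is strictly row diagonally dominant. Because $D$ is diagonal and nonsingular, $\tilde AD$ is precisely the matrix obtained from $AD$ by zeroing the off-diagonal positions corresponding to the entries dropped from $A$; zeroing off-diagonal entries never destroys strict row diagonal dominance, so $\tilde AD$ is strictly row diagonally dominant and hence nonsingular by Theorem~\ref{thm3}(i), whence $\tilde A$ is nonsingular. The cases $DA$ and the column-dominance analogues are identical. For (ii) with the natural choice $Q=P^T$, the map $A\mapsto PAP^T$ sends diagonal entries to diagonal entries and off-diagonal entries to off-diagonal entries, so $P\tilde AP^T$ is obtained from $PAP^T$ by zeroing off-diagonal positions and Theorem~\ref{thm3}(i) again applies. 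Case (iii) then follows by composing the scaling reduction of (i) with the permutation reduction of (ii).

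Case (iv) is obtained by repeating the reductions of (i)--(iii) with ``strictly'' replaced by ``irreducibly'' and invoking Theorem~\ref{thm3}(ii) at the end; the block-partition argument used in the proof of Theorem~\ref{thm3}(ii) carries over unchanged to $\tilde AD$, $D\tilde A$, and $P\tilde AP^T$, because scaling and symmetric permutation do not move diagonal entries out of diagonal blocks. For (v), I would use the standard characterization that $A$ is an $H$-matrix iff its comparison matrix $\langle A\rangle$, with diagonal entries $|a_{ii}|$ and off-diagonal entries $-|a_{ij}|$, is a nonsingular $M$-matrix. Since our splitting annihilates only off-diagonal entries of $A$, $\langle\tilde A\rangle$ is obtained from $\langle A\rangle$ by setting the corresponding off-diagonal entries to zero; by Theorem~3.25 of \cite{varga} (already invoked in the proof of Theorem~\ref{thm3}(iii)), $\langle\tilde A\rangle$ is still a nonsingular $M$-matrix, so $\tilde A$ is an $H$-matrix and in particular nonsingular.

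The main obstacle is case (ii) (and consequently (iii)) for truly arbitrary, unlinked $P$ and $Q$. When $Q\neq P^T$, the diagonal of $PAQ$ picks out a transversal $\{a_{\pi(i),\sigma(i)}\}_i$ of $A$ that may lie partly in the irregular columns and off the diagonal of $A$; such a dominant entry could in principle be killed by the dropping rule, and its loss would destroy the strict diagonal dominance of $P\tilde AQ$. Resolving this requires either an extra bookkeeping step --- showing that the dominant transversal is retained by the rule ``keep the diagonal and the $p-1$ nonzero entries nearest the diagonal'' --- or, more cleanly, routing the argument through the $H$-matrix reduction of (v), which is insensitive to such permutation asymmetries because the comparison-matrix bound uses no information about where the diagonal sits. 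This is the step I expect to be least routine.
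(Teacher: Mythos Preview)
Your reductions for (i)--(iv) are exactly what the paper means by ``direct from those of Theorem~\ref{thm3},'' and your handling of (ii) with $Q=P^T$ is the natural reading. For (v) you take a genuinely different route: the paper reduces (v) to (i) by invoking the characterization from \cite[p.~124]{hornjohnson} that $A$ is an $H$-matrix iff $AD$ is strictly row diagonally dominant for some nonsingular diagonal $D$, whereas you reduce to Theorem~\ref{thm3}(iii) via the comparison matrix $\langle A\rangle$. Both arguments are correct. Yours reuses the $M$-matrix closure property already cited in the proof of Theorem~\ref{thm3}(iii); the paper's has the side benefit of exhibiting $H$-matrices explicitly as a subclass of the matrices in (i), which is the remark made immediately after the proof.

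Your worry about (ii) and (iii) for unlinked $P,Q$ is a fair one, and the paper's one-line proof does not resolve it either. But your proposed rescue through (v) does not work: the comparison matrix $\langle A\rangle$ is built from the diagonal of $A$, so it is \emph{not} insensitive to where the diagonal sits. If $PAQ$ is strictly diagonally dominant with $Q\neq P^T$, $A$ need not be an $H$-matrix at all; e.g.\ $A=\left(\begin{smallmatrix}0&1\\1&0\end{smallmatrix}\right)$ becomes the identity after a column swap, yet $\langle A\rangle$ has zero diagonal and is not an $M$-matrix. So routing (ii) through (v) gains nothing. What both your argument and the paper's actually establish is the case $Q=P^T$; the general $P,Q$ case would need either the bookkeeping check you sketch (that the dominant transversal survives the dropping rule) or an additional hypothesis, and you should not claim it follows from the $H$-matrix machinery.
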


\begin{proof}
The proofs of Parts (i)--(iv) are direct from those of Theorem~\ref{thm3}. Assertion 5
holds because of a result of \cite[p. 124]{hornjohnson}, which states that a necessary
and sufficient condition for $A$ to be a $H$-matrix is that there
exists a nonsingular diagonal matrix $D$ for which $AD$ is strictly
row diagonally dominant.
\end{proof}

From the proof of (v) in Corollary~\ref{cor1}, we see that H-matrices
are a subclass of matrices in (i).

There should be more classes of matrices for which the resulting $\tilde A$
is nonsingular theoretically.
We do not pursue this topic further in this paper. For a general
real-world nonsingular $A$ even though it does not belong to
the classes of matrices in Theorem~\ref{thm3} and Corollary~\ref{cor1}.

The fourth issue is on the conditioning of $\tilde A$. For a general $A$,
it is possible to get either a well-conditioned or ill-conditioned
$\tilde A$. Given that $A$ is generally ill conditioned, the former is
more preferable, but we should not expect too much and the
latter is more possible. Theoretically speaking, $\tilde A$ may be worse
or better conditioned than $A$.  For a general irregular sparse $A$,
numerical experiments will indicate that $\tilde{A}$ is rarely worse
conditioned and in fact often, though not always, better conditioned than $A$.

However, for the first and third classes of matrices in Theorem~\ref{thm3},
we can analyze the conditioning of $\tilde A$ and show that $\tilde A$
may be generally better conditioned than $A$. More precisely,
for a strictly (row or column) diagonally dominant matrix $A$,
it is expected that the 1-norm condition number $\kappa_1(\tilde A)$ or
the infinity norm condition number $\kappa_{\infty}(\tilde A)$
is generally no more and may be considerably smaller than
$\kappa_1(A)$ or $\kappa_{\infty}(A)$. Similar claims hold for an $M$-matrix $A$.
Next we first look into the case that $A$ is
strictly row diagonally dominant. The case that $A$ is strictly column diagonally
dominant can be treated similarly.  Denote $\tilde A=(\tilde a_{ij})$, and define
the quantities
\begin{eqnarray*}
\beta_i&=&|a_{ii}|-\sum_{j\not=i}|a_{ij}|,\ i=1,2,\ldots,n,\\
\tilde\beta_i&=&|\tilde a_{ii}|-\sum_{j\not=i}|\tilde a_{ij}|,\ i=1,2,\ldots,n.
\end{eqnarray*}
Since that $\tilde a_{ii}=a_{ii}$ and all the nonzero entries $\tilde a_{ij}=a_{ij}$,
we have $\tilde\beta_i\geq \beta_i,\ i=1,2,\ldots,n$, with some strict
inequalities holding as $\tilde A$ drops some off-diagonal nonzero entries
in the $s$ irregular columns of $A$.
More precisely, from the definitions of $\tilde\beta_i$ and $\beta_i$,
it can be easily verified that if $i$ is in the set of the row indices of nonzero
entries in $U$ then $\tilde\beta>\beta_i$. The number of such $i$ is (much) bigger
than $s$ and can be very near to $n$ whenever $A$ has a fully dense column as
$n-p$ nonzero entries are dropped from an irregular column and put into a column of $U$.
Since $A$ is strictly row diagonally dominant, we have $\beta_i>0,\,i=1,2,\ldots,n$.
A result of \cite[p. 154]{higham} gives the following general bounds
\begin{eqnarray*}
\|A^{-1}\|_{\infty}&\leq&\frac{1}{\min_i\beta_i},\\
\|\tilde A^{-1}\|_{\infty}&\leq&\frac{1}{\min_i\tilde\beta_i}
\end{eqnarray*}
with $\|A^{-1}\|_{\infty}=1/\beta$ if all $\beta_i=\beta$ and
$\|{\tilde A}^{-1}\|_{\infty}=1/\tilde\beta$ if all $\tilde\beta_i=\tilde\beta$; see,
e.g., \cite{volkov}.
Since $\tilde\beta_i\geq\beta_i$ with some strict inequalities holding,
the bound for $\|\tilde A^{-1}\|_{\infty}$ can be smaller than that
for $\|A^{-1}\|_{\infty}$. On the other hand, it always holds that
$$
\|\tilde A\|_{\infty}\leq \|A\|_{\infty}.
$$
So, $\kappa_{\infty}(\tilde A)=\|\tilde A\|_{\infty}
\|\tilde A^{-1}\|_{\infty}$ is generally no more than $\kappa_{\infty}(A)=\|A\|_{\infty}
\|A^{-1}\|_{\infty}$ and may be considerably smaller than the latter, provided
that some dropped nonzero entries $a_{ij}$ from $A$ are not small.
For $A$ strictly column diagonally dominant, we define similar
$\tilde\beta_i$ and $\beta_i$ in the column sense. Then similar discussions
and the same claims can be made for the 1-norm condition number $\kappa_1(\tilde A)$
and $\kappa_1(A)$. The unique difference is
that the number of the corresponding $\tilde\beta_i>\beta_i$ in the column sense
is exactly $s$ since $A$ and $\tilde A$ only have $s$ different columns. Therefore,
for $A$ strictly (row or column) diagonally dominant, it is expected that
$\tilde A$ is better conditioned than $A$.

For $A$ an $M$-matrix, define
$$
r_i(A)=\sum_{j=1}^n a_{ij}, i=1,2,\ldots,n.
$$
It is known that $a_{ii}>0$ and $a_{ij}\leq 0$ for $j\not=i$ by the definition
of $M$-matrix. Then it is seen from $\beta_i$ defined above that
$r_i(A)=\beta_i,\ i=1,2,\ldots,n$. Similarly, we have $r_i(\tilde A)=\tilde\beta_i,\
i=1,2,\ldots,n$. It is proved in \cite{huliu} that if there is a positive diagonal
matrix $D$ such that
$$
r_i(AD)>0,\ i=1,2,\ldots,n
$$
then
$$
\frac{1}{\max r_i(AD)}\|D\|_{\infty}\leq\|A^{-1}\|_{\infty}\leq\frac{1}
{\min r_i(AD)}\|D\|_{\infty}
$$
and furthermore
$$
\|A^{-1}\|_{\infty}=\frac{1}
{r(AD)}\|D\|_{\infty}
$$
if $r_i(AD)=r(AD),\ i=1,2,\ldots,n$. Note that we have
$r_i({\tilde A}D)\geq r_i(AD)$ with some strict inequalities holding as $\tilde A$
drops some negative off-diagonal entries $a_{ij}$ from $A$ and $D$ is positive.
Thus, the upper bound for $\|{\tilde A}^{-1}\|_{\infty}$ is generally smaller than
that for $\|A^{-1}\|_{\infty}$.
Noticing that $\|\tilde A\|_{\infty}\leq \|A\|_{\infty}$,
we expect that $\kappa_{\infty}(\tilde A)$ is generally no more and can be smaller
than $\kappa_{\infty}(A)$. Similar discussions and claim go to $DA$ and
the 1-norm condition numbers of $A$ and $\tilde A$.

The fifth issue is on the existence of good sparse approximate inverses
of $\tilde A$.  The existence is generally definitive. We argue as follows: Since
${\mathcal P}(\tilde A)\subset {\mathcal P}(A)$,
for a given positive integer $l_{\max}$, we have
${\mathcal P}((|\tilde A|^T|\tilde A|)^{l_{\max}}\tilde A^T)
\subset {\mathcal P}((|A|^T |A|)^{l_{\max}}A^T)$
and ${\mathcal P}((I+|\tilde A|)^{l_{\max}})\subset {\mathcal P}((I+|A|)^{l_{\max}})$.
Assume that we take the initial sparsity ${\mathcal S}^{(0)}={\mathcal P}(I)$ and
implement SPAI and PSAI($tol$) $l_{\max}$ loops for $A$. Then it is
known \cite{huckle99,Jia2009power}
that the sparsity patterns of $M$ obtained by SPAI and PSAI($tol$)
are bounded by ${\mathcal P}((|A|^T |A|)^{l_{\max}}A^T)$
and ${\mathcal P}((I+|A|)^{l_{\max}})$, respectively. The same are true
for the sparsity patterns of $M$ obtained by SPAI and PSAI($tol$)
for $\tilde A$. This means that the effective envelops for the sparsity patterns
of $M$ obtained by SPAI and PSAI($tol$) for $\tilde A$ are contained
in those for $A$ for the same $l_{\max}$.
As a consequence, it is expected that $\tilde A$
has good sparse approximate inverses when $A$ does.

The last important issue is how to select stopping criteria for
Krylov iterations for $s+1$ linear systems so as to recover an approximate
solution of (\ref{eq:Axb}) with the prescribed accuracy.
It is seen from (\ref{update1}) that the solution $x$ of (\ref{eq:Axb}) is
formed from the ones of the $s+1$ new systems. Recall that the $s+1$ linear systems
are now supposed to be solved approximately by preconditioned Krylov solvers.
Our concerns are (i) how to define an approximate solution $\hat x$ from the $s+1$
approximate solutions of (\ref{regprob}) and (\ref{auxi}), and (ii) how accurately
we should solve (\ref{regprob}) and (\ref{auxi}) such that
$\hat x$ satisfies $\frac{\|r\|}{\|b\|}=\frac{\|b-A\hat x\|}{\|b\|} <
\varepsilon$. As it appears below, it is direct to settle down the first concern,
but the second concern is involved.

\begin{theorem}\label{thm2}
Let $\hat y$ and $\hat w_j,\ j=1,\ldots, s$, be the approximate solutions of
(\ref{regprob}) and (\ref{auxi}), respectively, and define
$\hat W=(\hat w_1,\ldots,\hat w_s)$ and the residuals
$r_{\hat y}=b-\tilde A\hat y$, $r_{\hat w_j}=u_j-\tilde A\hat w_j$. Assume
that $I+V^T\hat W$ is nonsingular with $V = (e_{j_1},e_{j_2},\ldots,e_{j_s})$,
and define $c=\|(I+V^T\hat W)^{-1}(V^T\hat y)\|$. Take
\begin{equation}\label{composi}
\hat x=\hat y-\hat W(I+V^T\hat W)^{-1}(V^T\hat y)
\end{equation}
to be an approximate solution of {\rm (\ref{eq:Axb})}. Then
if
\begin{equation}\label{stopb}
 \frac{\|r_{\hat y}\|}{\|b\|}<\frac{\varepsilon}{2}
\end{equation}
and
\begin{equation}\label{stopU2}
\frac{\|r_{\hat w_j}\|}{\|u_j\|}<\frac{\|b\|}{2\sqrt{s}c\|u_j\|}\varepsilon, \
j=1,2,\ldots,s,
\end{equation}
we have
\begin{equation}\label{stop}
 \frac{\|r\|}{\|b\|}=\frac{\|b-A\hat x\|}{\|b\|} < \varepsilon.
\end{equation}
\end{theorem}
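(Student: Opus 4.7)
The plan is to get a clean expression for the total residual $r=b-A\hat x$ in terms of the component residuals $r_{\hat y}$ and $r_{\hat w_j}$, then bound it by the triangle inequality and Cauchy--Schwarz so that the two hypotheses (\ref{stopb}) and (\ref{stopU2}) each contribute at most $\varepsilon \|b\|/2$.

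First I would exploit the splitting $A=\tilde A+UV^T$ from (\ref{split}). Writing $z=(I+V^T\hat W)^{-1}(V^T\hat y)$ so that $\hat x=\hat y-\hat Wz$, the key algebraic observation is that $z$ is essentially designed to make $V^T\hat x=z$: indeed $(I+V^T\hat W)z=V^T\hat y$ gives $V^T\hat W z=V^T\hat y-z$, hence $V^T\hat x=V^T\hat y-V^T\hat Wz=z$. Therefore $UV^T\hat x=Uz$. Using $\tilde A\hat y=b-r_{\hat y}$ and $\tilde A\hat W=U-r_{\hat W}$ where $r_{\hat W}=(r_{\hat w_1},\ldots,r_{\hat w_s})$, a direct computation yields
\begin{equation*}
A\hat x=\tilde A\hat y-\tilde A\hat Wz+Uz=(b-r_{\hat y})-(U-r_{\hat W})z+Uz=b-r_{\hat y}+r_{\hat W}z,
\end{equation*}
so the master identity is simply $r=r_{\hat y}-r_{\hat W}z$. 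I expect this identity to be the conceptual heart of the proof; everything afterwards is estimation.

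Next I would bound $\|r\|\leq\|r_{\hat y}\|+\|r_{\hat W}z\|$. Writing $r_{\hat W}z=\sum_{j=1}^s z_j\,r_{\hat w_j}$, Cauchy--Schwarz gives
\begin{equation*}
\|r_{\hat W}z\|\leq\sum_{j=1}^s|z_j|\,\|r_{\hat w_j}\|\leq\|z\|\Bigl(\sum_{j=1}^s\|r_{\hat w_j}\|^2\Bigr)^{1/2}=c\Bigl(\sum_{j=1}^s\|r_{\hat w_j}\|^2\Bigr)^{1/2}.
\end{equation*}
Plugging in the hypothesis $\|r_{\hat w_j}\|<\|b\|\varepsilon/(2\sqrt{s}\,c\,\|u_j\|)\cdot\|u_j\|=\|b\|\varepsilon/(2\sqrt{s}\,c)$ yields $\sum_{j=1}^s\|r_{\hat w_j}\|^2<s\cdot\|b\|^2\varepsilon^2/(4sc^2)$, so the square root is at most $\|b\|\varepsilon/(2c)$ and $\|r_{\hat W}z\|<c\cdot\|b\|\varepsilon/(2c)=\|b\|\varepsilon/2$. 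Combined with $\|r_{\hat y}\|<\|b\|\varepsilon/2$ from (\ref{stopb}), we conclude $\|r\|<\varepsilon\|b\|$, which is (\ref{stop}).

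The only step I expect to require a moment of care is verifying the identity $V^T\hat x=z$ and tracking the cancellation of the $Uz$ terms; once the identity $r=r_{\hat y}-r_{\hat W}z$ is in hand, the bounds are routine. The factor $\sqrt{s}$ in (\ref{stopU2}) is exactly what is needed for the Cauchy--Schwarz step to balance, and the factor $c=\|(I+V^T\hat W)^{-1}(V^T\hat y)\|$ appears naturally as $\|z\|$, so no extra machinery is required beyond the nonsingularity of $I+V^T\hat W$, which is part of the hypotheses.
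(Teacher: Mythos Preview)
Your proof is correct and follows essentially the same route as the paper: both derive the identity $r=r_{\hat y}-R_{\hat W}(I+V^T\hat W)^{-1}(V^T\hat y)$ and then bound the second term by $c\,(\sum_j\|r_{\hat w_j}\|^2)^{1/2}$. Your derivation of the identity via the observation $V^T\hat x=z$ is a bit slicker than the paper's direct chain of equalities, and you reach the Frobenius-type bound through a triangle-plus-Cauchy--Schwarz argument rather than the paper's $\|R_{\hat W}z\|\le\|R_{\hat W}\|\,\|z\|\le\|R_{\hat W}\|_F\,\|z\|$, but these are cosmetic differences leading to the same estimate.
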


\begin{proof}
Replacing $W$ and $y$ by their approximations $\hat W$ and
$\hat y$ in (\ref{update1}),
we get (\ref{composi}), which is naturally an approximate solution
of (\ref{eq:Axb}). Define $R_{\hat W}=U-\tilde A\hat W$. We obtain
\begin{align}
 r&=b-A\hat x=b-A\hat y+A\hat W(I+V^T\hat W)^{-1}(V^T\hat y) \nonumber\\
&=b-(\tilde A+UV^T)\hat y+(\tilde A+UV^T)\hat W(I+V^T\hat W)^{-1}
(V^T\hat y) \nonumber\\
&=r_{\hat y}-UV^T\hat y+UV^T\hat W(I+V^T\hat W)^{-1}(V^T\hat y)+
\tilde A\hat W(I+V^T\hat W)^{-1}(V^T\hat y) \nonumber\\
&=r_{\hat y}-U(I-V^T\hat W(I+V^T\hat W)^{-1})(V^T\hat y)+\tilde A
\hat W(I+V^T\hat W)^{-1}(V^T\hat y) \nonumber\\
&=r_{\hat y}-U(I-(I+V^T\hat W-I)(I+V^T\hat W)^{-1})(V^T\hat y)+\tilde A
\hat W(I+V^T\hat W)^{-1}(V^T\hat y) \nonumber\\
&=r_{\hat y}-U(I+V^T\hat W)^{-1}(V^T\hat y)+\tilde A\hat W(I+V^T\hat W)^{-1}
(V^T\hat y) \nonumber\\
&=r_{\hat y}-R_{\hat W}(I+V^T\hat W)^{-1}(V^T\hat y), \nonumber
\end{align}
from which it follows that
\begin{equation}\label{rry}
\|r\|\leq \|r_{\hat y}\|+c\|R_{\hat W}\|\leq \|r_{\hat y}\|+
c\|R_{\hat W}\|_F.
\end{equation}
By definition of $c$ and $R_{\hat W}(:,j)=r_{\hat w_j}$, we have
$\|R_{\hat W}\|_F=\sqrt{\sum_{j=1}^s\|r_{\hat w_j}\|^2}$. If (\ref{stopb}) and
$$
 \frac{\|r_{\hat w_j}\|}{\|b\|}<\frac{\varepsilon}{2\sqrt{s}c}, \ j=1,2,\ldots,s,
$$
it is seen from (\ref{rry}) that (\ref{stop}) holds.
Since the above relation is just (\ref{stopU2}), the theorem holds.
\end{proof}

We point out that because of (\ref{rry}) our stopping criterion (\ref{stopU2})
may be conservative. Furthermore,  $c$ is moderate if $I+V^T\hat W$ is well
conditioned, and it may be large if $I+V^T\hat W$ is ill conditioned. Since $s$
is supposed very small, this theorem indicates that the $s$ linear systems
(\ref{auxi}) need to be solved with the accuracy at the level of
$\varepsilon$. We may solve them  by Krylov solvers either simultaneously
in the parallel environment or independently in the sequential environment.
An alternative approach is to solve them using block Krylov solvers.
Note that $c$ cannot be computed until iterations for (\ref{regprob})
and (\ref{auxi}) terminate, but the stopping criterion (\ref{stopU2}) depends on $c$.
Therefore, the computation of $c$ and termination of iterations interacts.
In implementations, we simply replace $c$ by 1 in (\ref{stopU2}) and stop iterative
solvers for the $s$ systems (\ref{auxi}) with the modified accuracy requirement.
Because of this inaccuracy, (\ref{stop}) may fail to meet but $\|r\|/\|b\|$
should be at the level of $\varepsilon$.
In later numerical experiments, we will find that $c=1$ works very well and
makes (\ref{stop}) hold for almost all the test problems, and the
right-hand sides of (\ref{stop}) are only a little bit bigger than
$\varepsilon$ in the rare cases where (\ref{stop}) does not meet.

We make some further comments on (\ref{stopU2}). For a real-world
problem, if $A$ is poorly scaled, it is well known that
a preprocessing is generally done that uses scaling
to equilibrate $A$ so that its columns and/or rows are nearly the
same in norm. Without loss of generality, suppose
that $\|b\|$ is comparable to them in size; otherwise, we
replace $b$ by a scaled $\hat b=\alpha b$ with $\alpha$ a
scaling factor and instead solve the equivalent problem $A(\alpha x)=\hat b$,
so that $\|\hat b\|$ is comparable to the norms of columns of the
equilibrated $A$. Then the sizes of $\|b\|/\|u_j\|$ are typically
around $1$. We suppose that such processing is performed. As a result,
$s$ linear systems (\ref{auxi}) are solved with the accuracy at the level of
$\varepsilon$. Therefore, we need not worry about
the issue of small $\|b\|/\|u_j\|$ for a given problem.

\section{Numerical experiments}\label{numerexp}

In this section, we test our approach and compare it with the approach
that preconditions (\ref{eq:Axb}) by SPAI and PSAI($tol$) directly.
We report the numerical experiments
obtained by the Biconjugate Gradient Stablized (BiCGStab) with SPAI and PSAI($tol$)
preconditioning on (\ref{eq:Axb}) and (\ref{regprob}), (\ref{auxi}),
respectively. Such combinations give rise to four algorithms, and we name them
Standard-SPAI, New-SPAI, and Standard-PSAI($tol$) and New-PSAI($tol$),
abbreviated as S-SPAI, N-SPAI and S-PSAI($tol$), N-PSAI($tol$), respectively.

The experiments consists of three subsections, and
our aims are quadruple: (i) We demonstrate the
considerable efficiency superiority of N-SPAI to S-SPAI and that of N-PSAI($tol$)
to S-PSAI($tol$). (ii) With the same parameters used in SPAI,
we show that preconditioners obtained by N-SPAI
are more effective than the corresponding ones obtained by
S-SPAI. (iii) With the same parameters used in PSAI($tol$),
we illustrate that the preconditioners by S-PSAI($tol$) and N-PSAI($tol$)
are equally effective for preconditioning each problem, provided that
they can be computed. (iv) We illustrate that if the numbers of nonzero entries
of preconditioners, i.e., the sparsity of preconditioners, are (almost) the same
then PSAI($tol$) is more effective than SPAI for preconditioning
both irregular and regular sparse linear systems. The results mean
that PSAI($tol$) captures a sparsity pattern of $A^{-1}$
and $\tilde A^{-1}$ more effectively than SPAI and thus generate better
preconditioners. They also imply that even for regular sparse linear systems,
SPAI may be ineffective for preconditioning.

We mention that, for other Krylov solvers, such as BiCG, CGS and the restarted
GMRES(20), we have done similar numerical experiments and had the same findings
as above. So it suffices to only report and evaluate the results obtained by BiCGStab.

Before testing our approach, we look into all the matrices in the University of
Florida sparse matrix collection \cite{davis2011university} and give illustrative
information on where irregular sparse linear systems
come from, how common they are in practice, how big $s$ can be and how dense
irregular columns. We divide matrices into their problem
domains, and sort them by percentage of matrices in that domain that are irregular.
Table \ref{tabl:irr} lists the relevant information, where ``per. irreg''
denotes the percentage of irregular matrices in each domain, ``\#reg. prob'' and
``\#irreg. prob'' are the numbers of regular and irregular matrices in each domain,
respectively. Matrices labeled as ``graphs" in the collection are excluded, many
of which are irregular, but not all are linear systems.

\begin{table}[ht]{\footnotesize
\begin{center}
\caption{Statistics of regular/irregular problems in the sparse matrix collection
\cite{davis2011university}}\label{tabl:irr}
\begin{tabular}{|c|c|c|c|}
\hline
problem domain&per. irreg&\#reg. prob&\#irreg. prob\\
\hline
frequency-domain circuit simulation problem&100\%&0&4\\
linear programming problem&100\%&0&1\\
semiconductor device problem&63\%&13&22\\
optimization problem&61\%&53&82\\
power network problem&56\%&26&33\\
circuit simulation problem&51\%&124&132\\
computer graphics/vision problem&33\%&2&1\\
economic problem&33\%&44&21\\
counter-example problem&25\%&6&2\\
eigenvalue/model reduction problem&20\%&28&7\\
material problem&11\%&25&3\\
chemical process simulation problem&11\%&62&8\\
statistical/mathematical problem&11\%&8&1\\
theoretical/quantum chemistry problem&11\%&54&7\\
2D/3D problem&10\%&118&13\\
acoustics problem&8\%&12&1\\
structural problem&5\%&287&14\\
combinatorial problem&3\%&28&1\\
computational fluid dynamics problem&3\%&167 &5\\
thermal problem&3\%&30&1\\
electromagnetic  problem&2\%&49&1\\
least squares problem&0\%&2&0\\
model reduction problem&0\%&47&0\\
other problem&0\%&4&0\\
random 2D/3D problem&0\%&2&0\\
robotics problem&0\%&3&0\\
\hline
\end{tabular}
\end{center}}
\end{table}

The statistics in Table~\ref{tabl:irr} illustrates that irregular sparse linear
systems are quite common and come from many applications.
Figure~\ref{fignew} depicts many more details. In the top left plot, a circle is a
matrix in the collection, the $x$ axis is the order of a square matrix, and
the $y$ axis is the number $s$ of dense columns. The steep line is $s=n$, which
is not achievable, and the flat line is $s=\sqrt{n}$.
This figure plots matrices with at least one dense column.
In the top right plot, each dot is a square matrix, the $x$ axis is the mean,
i.e., the average number $p$, of nonzero entries in each column, which equals
$\lfloor nnz (A) /n\rfloor$, and the $y$ axis is the number of
nonzero entries in the densest column divided by the mean for that matrix.
The line parallel to the $x$ axis is $y=10$.
Matrices have at least one dense column if
they reside above the line $y=10$. For each $x$, the bigger $y$, the denser
the irregular column.  The bottom two plots are the same,
but with social networks and other graphs excluded. \footnote{The figures and the
previous data analysis are due to Professor Davis, and we thank him very much.}

These figures further demonstrate that there are very dense columns for many matrices,
irregular sparse matrices are common, and $s$ can be quite big.

\begin{figure}[ht]
\begin{center}
\epsfig{figure=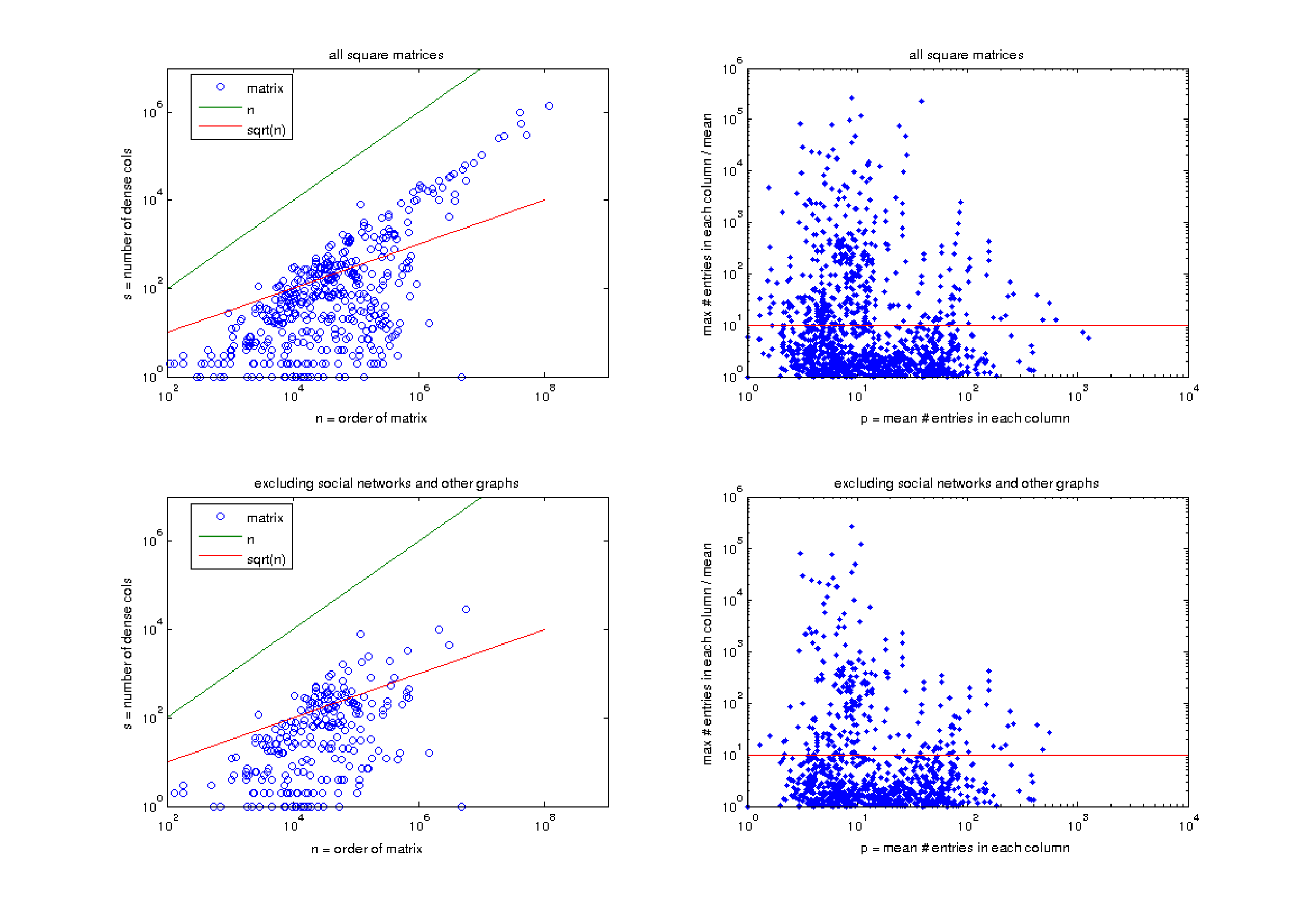,height=3.5in}
\end{center}
\caption{Illustrations of irregular matrices in the sparse matrix collection
\cite{davis2011university}.}
\label{fignew}
\end{figure}

We test our approach on some of the above irregular sparse linear systems.
A brief description is presented in Table \ref{prob}, where
$\kappa(A)=\|A\|\|A^{-1}\|$. The right-hand side $b$ of $Ax=b$ was formed
by taking the solution $x=(1,1,\ldots,1)^T$. Taking the initial approximate
solution to be zero for each problem and $\varepsilon=10^{-8}$ in stopping
criterion (\ref{stop}), we have found that BiCGStab without preconditioning did
not converge for any of the test problems within 500 iterations. We note that
{\em cbuckle} is symmetric positive definite.
So it should be better to design preconditioners to maintain the symmetry
of preconditioned matrices, which
can be achieved by using factorized or splitting preconditioners, e.g.,
\cite{benzi1996sparse, janna2010block}, so that some more efficient symmetric
Krylov solvers, e.g., the Conjugate Gradient (CG) or Minimal Residual (MINRES)
method, can be applied. In our experiments, however, we used {\em cbuckle} in
PSAI($tol$) purely for test purposes and treated it as a general matrix.

\begin{table}[ht]{\footnotesize
\begin{center}{
\caption{The description of test matrices. ``sym'' denotes the
symmetry of a matrix, and we used the {\sc Matlab} function {\sf condest} to
estimate the 1-norm condition numbers of the latter seven larger matrices.}
\label{prob}
\begin{tabular}{@{}lccccl@{}}
 \hline
Matrix&$n$&$nnz(A)$&$\kappa(A)$&sym&Description\\
\hline
fs\_541\_3&541&4282&$2.83\times10^{11}$&No&2D/3D problem\\
fs\_541\_4&541&4273&$1.17\times 10^{10}$&No&2D/3D problem\\
rajat04&1041&8725&$1.64\times 10^{8}$&No&circuit simulation problem\\
rajat12&1879&12818&$6.91\times 10^{5}$&No&circuit simulation problem\\
tols4000&4000&8784&$2.36\times 10^7$ &No&computational fluid dynamics problem\\
cbuckle&13681&676515&$3.30\times 10^7$&Yes&structural problem\\
ASIC\_100k&99340&940621&$1.46\times 10^{11}$&No&circuit simulation problem\\
dc1&116835&766396&$1.01\times 10^{10}$&No&circuit simulation problem\\
dc2&116835&766396&$8.86\times 10^9$&No&circuit simulation problem\\
dc3&116835&766396&$1.16\times 10^{10}$&No&circuit simulation problem\\
trans4&116835&749800&$3.30\times 10^9$&No&circuit simulation problem\\
trans5&116835&749800&$2.32\times 10^9$&No&circuit simulation problem\\
\hline
\end{tabular}}
\end{center}}
\end{table}

We conducted the numerical experiments on an Intel
(R) Core (TM)2 Quad CPU E8400 @ $3.00$GHz with main memory 2 GB
under the Linux operating system. The computations were done using
{\sc Matlab} 7.8.0 with the machine precision $\epsilon_{\rm mach}=2.22\times
10^{-16}$, and SPAI preconditioners were constructed by the
SPAI 3.2 package \cite{barnard} of Barnard, Br\"{o}ker, Grote and
Hagemann, which is written in C/MPI. Our PSAI($tol$) code is
written in {\sc Matlab} language for the sequential environment.
We used both SPAI and PSAI($tol$) as right preconditioning and
took the initial patterns $\mathcal S_k^{(0)}=\{k\},
\ k=1,2,\ldots,n$. We took $c=1$ in (\ref{stopU2}) and stopped Krylov iterations
when (\ref{stopb}) and (\ref{stopU2}) were satisfied with $\varepsilon=10^{-8}$
or 500 iterations were used. The initial approximate solution for each
problem was zero vector. With $\hat x$ defined by (\ref{composi}),
we computed the actual relative residual norm
\begin{equation}\label{rrnorm}
rr=\frac{\|b-A\hat x\|}{\|b\|}
\end{equation}
and compared it with the required accuracy $\varepsilon=10^{-8}$.

Before performing our algorithms, we carried out row Dulmage--Mendelsohn
permutations \cite{Duff,Pothen} on the matrices that have zero diagonals,
so as to make their diagonals nonzero. This preprocessing, though not necessary
theoretically, may be more suitable for taking
the initial pattern ${\mathcal S}={\mathcal P}(I)$ in SPAI, which always
retains the diagonals as most profitable indices in $M$.
The related {\sc Matlab} commands are $j=\verb"dmperm"(A)$ and $A=A(j,:)$.
We applied \verb"dmperm" to {\em rajat04}, {\em rajat12}, {\em tols4000} and
{\em ASIC\_100k}. For all the matrices listed in Table \ref{prob}, $\tilde A$
is constructed by retaining the diagonal entry and $p-1$ nonzero entries
nearest to the diagonal and dropping the others in each of the irregular columns
of $A$, and $U$ is composed of the dropped nonzero entries.
Table~\ref{matnnz} shows some useful information on each $A$ and
$\tilde A$.

\begin{table}[ht]{\footnotesize
\begin{center}{\small
\caption{Some information on $A$ and $\tilde A$. $s$: \# irregular columns
in $A$; $p$: the average number of nonzero entries per column of $A$; $p_d$:
\# nonzero entries in the densest column of $A$. We used {\sc Matlab} function
{\sf condest} to estimate the 1-norm condition numbers of the last seven
larger matrices.}
\label{matnnz}
\begin{tabular}{|c|c|c|c|c|c|c|c|c|}
\hline
&$n$&$s$&$p$&$p_d$&$nnz(A)$&$nnz(\tilde A)$&$\kappa(A)$&$\kappa(\tilde A)$\\
\hline
fs\_541\_3&541&1&7&538&4282&3745&$2.83\times10^{11}$&$7.86\times10^6$\\ \hline
fs\_541\_4&541&1&7&535&4273&3739&$1.17\times 10^{10}$&$1.64\times10^6$\\ \hline
rajat04&1041&4&8&642&8725&7306&$1.64\times 10^{8}$&$1.16\times10^8$\\ \hline
rajat12&1879&7&6&1195&12818&9876&$6.91\times 10^{5}$&$6.53\times10^5$\\ \hline
tols4000&4000&18&2&22&8784&8424&$2.36\times 10^7$&$2.36\times 10^7$\\ \hline
cbuckle&13681&1&49&600&676515&675916&$3.30\times 10^7$&$8.06\times10^7$\\ \hline
ASIC$\ast$&99340&122&9&92258&940621&742736&$1.46\times10^{11}$&$9.28\times10^9$\\ \hline
dc1&116835&55&6&114174&766396&595757&$1.01\times 10^{10}$&$2.17\times10^8$\\ \hline
dc2&116835&55&6&114174&766396&595757&$8.86\times 10^9$&$5.81\times10^7$\\ \hline
dc3&116835&55&6&114174&766396&595757&$1.16\times 10^{10}$&$1.52\times10^8$\\ \hline
trans4&116835&55&6&114174&749800&587459&$3.30\times 10^{9}$&$3.46\times10^8$\\ \hline
trans5&116835&55&6&114174&749800&587459&$2.32\times 10^{9}$&$6.54\times10^7$\\
\hline
\end{tabular}}
\end{center}}
\end{table}

We have some informative observations from Table~\ref{matnnz}. As is seen, except
{\em cbuckle} and {\em tols4000}, all the other test matrices have some almost
fully dense columns. The matrix {\em cbuckle} and {\em tols4000} have 1 and 18
not very dense irregular columns, respectively.
Precisely, except {\em cbuckle} and {\em tols4000}, {\em rajat04} and {\em rajat12}
have some irregular columns which have more than $n/2$ nonzero entries, and all
the other matrices have some fully dense columns. The table also shows the number
$s$ of irregular columns of each $A$ and the percentage
$s/n$. The biggest two percentages $0.45\%$ and $0.38\%$ correspond to
{\em tols4000} and {\em rajat04}, where $s=18$ and $4$, respectively.
It is remarkable that the eight $\tilde A$ are considerably
better conditioned than the corresponding $A$, and their condition
numbers are accordingly reduced by roughly one to five orders.
For the other three matrices {\em rajat04}, {\em rajat12} and
{\em cbuckle}, each pair of $\tilde A$ and $A$ have very near condition numbers.

We next investigate the patterns of entries of large magnitude
in the ``exact" inverses of an
irregular sparse matrix and the regular sparse matrix induced from it.
We only take {\em rajat04} as an example. After performing row
Dulmage--Mendelsohn permutation on it, we use the {\sc Matlab} function
$\verb"inv"$ to compute $A^{-1}$ and $\tilde A^{-1}$ and then drop their
nonzero entries whose magnitudes fall below $10^{-3}$. We depict the
patterns of sparsified $A^{-1}$ and $\tilde A^{-1}$ as (A) and (B) in
Figure~\ref{fig1}, respectively. It is clear that good approximate inverses
of $A$ and $\tilde A$ are sparse but there are
several dense columns in (A), which means that an effective sparse
approximate inverse of $A$ is irregular. On the other hand, the situation is
improved substantially in (B), from which it is seen that a good sparse
approximate inverse of $\tilde A$ is sparser than
the matrix in (A). These results typically demonstrate that
good sparse approximate inverses of an irregular sparse matrix are
also irregular, while the resulting regular sparse matrix $\tilde A$
has sparser approximate inverses. We have checked several other test matrices and
have had the same findings.

\begin{figure}[ht]{\small
\begin{center}
\epsfig{figure=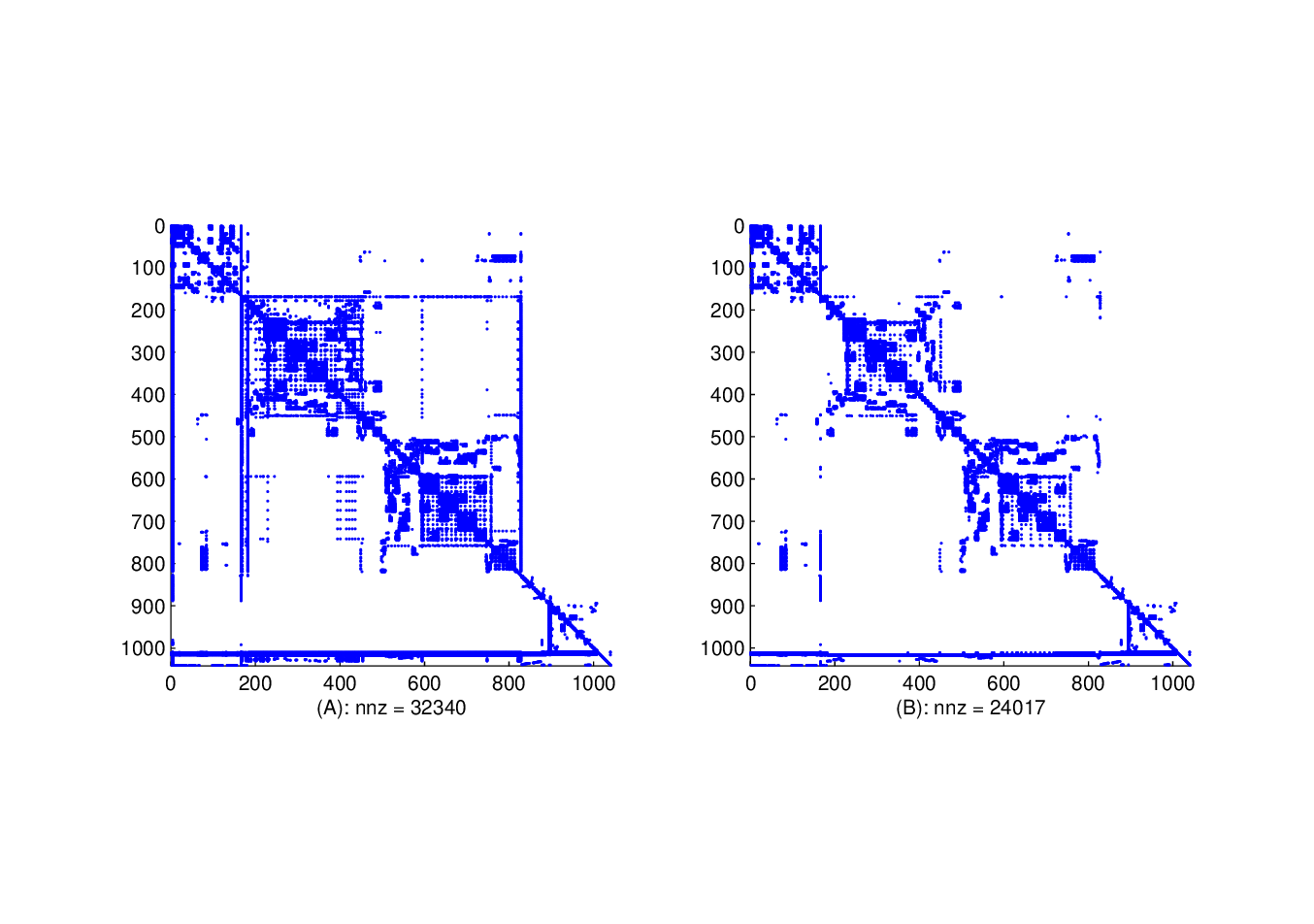,height=3.5in}
\end{center}
\caption{{\em rajat04}: (A) and (B) are the patterns of sparsified $A^{-1}$ and
$\tilde A^{-1}$, respectively.}\label{fig1}}
\end{figure}

In all the later tables, we denote by $T_{setup}$ and $T_{solve}$ the setup
(construction) time(sec) of $M$ and the time(sec) of solving the
preconditioned linear systems by BiCGStab, respectively, by $spar=nnz(M)/nnz(A)$ or
$nnz(M)/nnz(\tilde A)$  the sparsity of $M$ relative
to $A$ or $\tilde A$, and by $iter$ the iteration number
that BiCGStab used for (\ref{eq:Axb}) and the maximum of
iteration numbers that BiCGStab used for the $s+1$ systems
(\ref{regprob}) and (\ref{auxi}), respectively. The actual relative residual
norm $rr$ defined by (\ref{rrnorm}) is $a\cdot\varepsilon$, and we only
list the multiple $a$ in the tables. So $a<1$ indicates that BiCGStab converged
with the prescribed accuracy $\varepsilon$. The size of $a$ reflects
whether taking $c=1$ in (\ref{stopU2}) is reliable or not.

\subsection{Numerical results obtained by SPAI}\label{expSPAI}

We take $\delta=0.4$ as the stopping criterion for SPAI. Since effective
sparsity patterns of $A^{-1}$ and $\tilde{A}^{-1}$ are generally unknown
in advance, some key parameters, especially
the number of most profitable indices per loop involved can only be chosen
empirically in order to control the sparsity of $M$ and simultaneously to
make $M$ achieve the desired accuracy $\delta$ as much as possible. In the
experiments of this subsection, we fix the number of most profitable indices
to be 5 per loop, which is also the default value in the SPAI 3.2 code.
The maximum $l_{\max}$ of loops is set to 20, bigger than the default value 5
in the code. SPAI terminated whenever $\|Am_k-e_k\|\leq\delta,
\ k=1,2,\ldots,n$ or $l_{\max}$ loops were attained.
SPAI is run for $\tilde{A}$ with the same parameters. With the given
parameters and the initial pattern ${\cal S}_k^{(0)}=\{k\},\ k=1,2,\ldots,n$,
the number of nonzero entries per column in the final $M$ is bounded by
$1+5\times 19=96$. So if good preconditioners are irregular sparse and there
is at least one column whose number of nonzero entries are bigger than 96,
then SPAI may be ineffective for preconditioning. We attempt to show that
with the same parameters used in SPAI, N-SPAI is much more efficient than S-SPAI,
and the former is more effective than the latter for preconditioning.
We mention that we have omitted the symmetric positive definite matrix
{\em cbuckle} since the matrix input in the SPAI 3.2 package is only supported
by ``real general'' Matrix Market coordinate format.
Table~\ref{spaidata} lists the results.

\begin{table}{\footnotesize
\begin{center}
\caption{Numerical results obtained by SPAI.
$T_{setup}$ and $T_{solve}$: the setup time of $M$
and the time of BiCGStab iterations for the preconditioned linear
systems, respectively. The relative residual
norm $rr=a\cdot\varepsilon$. $n_c$: \# columns in $M$
failing to satisfy the accuracy $\delta=0.4$. For S-SPAI, $spar=nnz(M)/nnz(A)$
and $iter$: \# iterations of BiCGStab for (\ref{eq:Axb}), while for
N-SPAI, $spar=nnz(M)/nnz(\tilde A)$ and $iter$: the maximum of \# iterations of
BiCGStab for the $s+1$ systems (\ref{regprob}) and (\ref{auxi}).}
\label{spaidata}
\begin{tabular}{|p{.9cm}<{\centering}|p{.7cm}<{\centering}|p{.7cm}<
{\centering}|p{.5cm}<{\centering}|p{.4cm}<{\centering}|p{.4cm}<{\centering}
|p{.4cm}<{\centering}|p{.7cm}<{\centering}|p{.7cm}<{\centering}|p{.5cm}<
{\centering}|p{.4cm}<{\centering}|p{.4cm}<{\centering}|p{.4cm}<{\centering}|}
\hline
&\multicolumn{6}{c|}{S-SPAI}&\multicolumn{6}{c|}{N-SPAI}\\ \cline{2-13}
&$T_{setup}$&$T_{solve}$&$spar$&$iter$&$a$&$n_c$&$T_{setup}$&$T_{solve}$&$spar$&
$iter$&$a$&$n_c$\\ \hline
fs\_541\_3&0.33&0.01&1.34&21&0.40&1 &0.17&0.01&1.52&8&0.62&0\\ \hline
fs\_541\_4&0.14&0.01&0.81&14&0.01&1 &0.04&0.01&0.91&6&0.68&0\\ \hline
rajat04&1.17&0.01&0.36&31&0.20&6 &0.16&0.02&0.40&17&0.57&2\\\hline
rajat12&3.19&0.01&0.86&48&0.31&3 &0.39&0.08&1.08&39&0.80&0\\\hline
tols4000&0.10&0.02&1.01&3&0.44&6 &0.05&0.02&1.02&4&0.48&6\\\hline
ASIC$\ast$&$*$&$*$&$*$&$*$&$*$&$*$&1619&13.52&0.66&10&1.37&12\\ \hline
dc1&$*$&$*$&$*$&$*$&$*$&$*$&8216&44.07&1.71&351&0.61&1\\\hline
dc2&$*$&$*$&$*$&$*$&$*$&$*$&6210&41.75&1.62&76&0.75&0\\ \hline
dc3&$*$&$*$&$*$&$*$&$*$&$*$&5243&47.05&1.63&98&0.91&9\\ \hline
trans4&$*$&$*$&$*$&$*$&$*$&$*$&3798&9.13&1.64&39&0.49&0\\ \hline
trans5&$*$&$*$&$*$&$*$&$*$&$*$&3546&20.79&1.57&84&0.65&0\\\hline
\end{tabular}
\end{center}}
\end{table}

We make some comments on the results. First, we look at the efficiency of
constructing $M$ by S-SPAI and N-SPAI. In the table, the notation
``$*$'' for the last six larger matrices indicates that S-SPAI could not
compute $M$ within 100 hours because of the irregular sparsity of $A$.
Since each of these six matrices has fully dense irregular columns,
the unaffordable time consumption results from the large
cardinal numbers of $\tilde{\mathcal J}$ and $\mathcal L$ in
\S\ref{spai}. More precisely, for a fully dense irregular column
of $A$, we have to compute almost the $n$ numbers $\mu_j$
in (\ref{muj}), sort almost $n$ indices in $\tilde{\mathcal L}$ and select
five most profitable indices among them at each loop. Carrying out these tasks
is very time consuming. In contrast, N-SPAI did a very good job due to
the regular sparsity of $\tilde A$. For the other matrices and given the
parameters, the two $M$ obtained by S-SPAI and N-SPAI have very
similar sparsity, but it is seen from $T_{setup}$ that N-SPAI can be
considerably more efficient than S-SPAI, and the former can be several
times faster, e.g., six and seven times faster for {\em rajat04} and
{\em rajat12}, respectively. For the last six larger matrices,
N-SPAI computed all the $M$ within no more than half an hour to two hours,
drastic improvements over S-SPAI!

Second, we investigate the effectiveness of $M$ for preconditioning.
To be more illuminating, we recorded the number $n_c$ of columns in $M$
which failed to satisfy the accuracy $\delta=0.4$ for each matrix in
S-SPAI or N-SPAI. We have observed that for the first four matrices,
the $n_c$ produced by N-SPAI are always smaller than those by S-SPAI.
This illustrates that it is more difficult for SPAI to compute an
effective preconditioner when $A$ is irregular sparse. This is also
reflected by the iteration number $iter$, from which it is seen that N-SPAI is
more effective than S-SPAI for the first four matrices. This confirms
our claim that SPAI may be less effective for preconditioning irregular
sparse linear systems.

Third, it is seen from the quantity $a$ that most of the actual
relative residual norms $rr$ in (\ref{rrnorm}) dropped below $\varepsilon$.
The case that $a>1$ happened only for {\em ASIC\_100k},
but the actual relative residual norm $rr=1.37\times 10^{-8}$
is very near to the required accuracy $\varepsilon=10^{-8}$,
indicating that the algorithm essentially converged. This means that taking
$c=1$ in (\ref{stopU2}) worked robustly in practice.

Finally, we compare the overall performance of N-SPAI and S-SPAI. For the last six
larger matrices, S-SPAI failed to compute the desired $M$ within 100 hours, while
N-SPAI was very efficient to do the same job and exhibited huge superiority.
For the other problems, N-SPAI is two to eight times faster than S-SPAI.
We see that the construction of $M$ by N-SPAI dominates the
total cost of our approach, and the efficiency of N-SPAI compensates for the
price of solving the $s+1$ linear systems. As a result, as far as the overall
efficiency is concerned, our approach is considerably superior to SPAI applied to
precondition the irregular sparse (\ref{eq:Axb}) directly.

\subsection{Numerical results obtained by PSAI($tol$)}

We look into the performance of N-PSAI($tol$) and S-PSAI($tol$) and show that the
former is much more efficient than the latter. We also illustrate
that N-PSAI($tol$) and S-PSAI($tol$) are equally effective for
preconditioning, provided that they can compute preconditioners with
the same parameters used in PSAI($tol$). We always take $\delta=0.4$
and $l_{\max}=10$ to control the sparsity and quality of $M$ for both S-PSAI($tol$)
and N-PSAI($tol$). PSAI($tol$) terminated when $\|Am_k-e_k\|\leq\delta$ or the loops
$l>l_{\max}$. We used the $tol_k$ defined by (\ref{tolk}) as the dropping tolerances.
In Table~\ref{psaidata}, we report the results on the matrices in Table~\ref{prob},
where $l_m$ is the actual maximum loops that PSAI($tol$) used.

\begin{table}[ht]{\footnotesize
\begin{center}
\caption{Numerical results obtained by PSAI($tol$).
$T_{setup}$ and $T_{solve}$: the setup time of $M$
and the time of BiCGStab iterations for the preconditioned linear
systems, respectively. The relative residual
norm $rr=a\cdot\varepsilon$. $l_m$: the actual maximum loops. For S-PSAI($tol$),
$spar=nnz(M)/nnz(A)$ and $iter$: \# iterations of BiCGStab for (\ref{eq:Axb}),
while for N-PSAI($tol$), $spar=nnz(M)/nnz(\tilde A)$ and $iter$: the maximum of
\# iterations of BiCGStab for the $s+1$ systems (\ref{regprob}) and (\ref{auxi}).}
\label{psaidata}
{\small
\begin{tabular}{|p{1cm}<{\centering}|p{.7cm}<{\centering}|p{.7cm}<{\centering}
|p{.5cm}<{\centering}|p{.4cm}<{\centering}|p{.4cm}<{\centering}|
p{.4cm}<{\centering}|p{.7cm}<{\centering}|p{.7cm}<{\centering}|p{.5cm}<
{\centering}|p{.4cm}<{\centering}|p{.4cm}<{\centering}|p{.4cm}<{\centering}|}
\hline
&\multicolumn{6}{c|}{S-PSAI($tol$)}&\multicolumn{6}{c|}{N-PSAI($tol$)}\\ \cline{2-13}
&$T_{setup}$&$T_{solve}$&$spar$&$iter$&$a$&$l_m$&$T_{setup}$&$T_{solve}$&$spar$
&$iter$&$a$&$l_m$\\ \hline
fs\_541\_3&0.36&0.01&1.55&6&0.31&3  &0.29&0.01&1.63&6&0.30&4\\ \hline
fs\_541\_4&0.30&0.01&1.35&5&0.22&3  &0.25&0.01&1.38&5&0.42&3\\ \hline
rajat04&1.27&0.01&0.72&11&0.79&4  &0.28&0.02&0.51&11&0.58&4\\ \hline
rajat12&3.48&0.01&2.22&32&0.74&2  &1.40&0.10&1.92&44&0.56&2\\ \hline
cbuckle&3841&1.62&3.41&85&0.21&5  &2322&2.12&2.76&71&0.55&5\\ \hline
tols4000&0.94&0.01&0.97&2&0.11&1 &0.79&0.01&1.00&2&0.23&2\\ \hline
ASIC$\ast$&-&-&-&-&-&-&1429&10.05&0.81&6&0.78&2\\ \hline
dc1&-&-&-&-&-&-&3203&31.36&1.54&201&0.81&3\\ \hline
dc2&-&-&-&-&-&-&2975&27.33&1.62&59&0.58&3\\ \hline
dc3&-&-&-&-&-&-&2966&24.10&1.63&56&0.61&8\\ \hline
trans4&-&-&-&-&-&-&3073&5.61&1.79&31&0.44&4\\ \hline
trans5&-&-&-&-&-&-&2856&9.86&1.61&58&1.29&4\\ \hline
\end{tabular}
}
\end{center}}
\end{table}

From Table~\ref{psaidata} we first see that all $l_m<l_{\max}=10$. This indicates
that S-PSAI($tol$) and N-PSAI($tol$) computed the $M$ with the prescribed accuracy
$\delta$.

In the table, the notation ``-'' for the last six larger matrices
indicates that our computer was out of memory when constructing
each $M$. The cause is that each $A$ of them has some fully dense
irregular columns, which result in some large LS problems (\ref{lsprob}).
So, PSAI($tol$) encounters a severe difficulty when $A$ has some very
dense columns. For all the other matrices, S-PSAI($tol$)
generated $M$ with the desired accuracy $\delta=0.4$.
Furthermore, we have seen that S-PSAI($tol$) always used nearly the same $l_m$
as N-PSAI($tol$) for each $A$, but it is considerably more time consuming than
N-PSAI($tol$). So, using (almost) the same loops $l_m$, PSAI($tol$) captured
the effective sparsity patterns of both irregular sparse $A$
and regular sparse $\tilde A$. So they are expected to be comparably
effective for preconditioning both irregular and regular sparse linear problems.
Indeed, as we have seen from Table~\ref{psaidata}, both S-PSAI($tol$) and
N-PSAI($tol$) provide effective preconditioners for the first five matrices
since BiCGStab used comparable iterations to achieve the convergence.
Compared with the results in \S\ref{spai}, we see this is a distinctive feature that
S-SPAI and N-SPAI do not have, where SPAI may be ineffective when $A$ is irregular
sparse, and the situation may be improved when $A$ is regular sparse.

Still, we see that $c=1$ works very well for all problems and makes almost all
the actual relative residual norms defined by (\ref{rrnorm}) drop $\varepsilon$.
The only exception is for {\em trans5}, where $a=1.29$. But such $a$ indicates
that the actual relative residual norm for this problem is very near to
$\varepsilon$, so we may well accept the approximate solution as
essentially converged.

Finally, we are concerned with the overall performance of the algorithms.
The table clearly shows that the performance of N-PSAI($tol$) is superior
to S-PSAI($tol$), in terms of the total time equal to $T_{setup}$ plus
$T_{solve}$. Since S-PSAI($tol$) and N-PSAI($tol$) provide equally
effective preconditioners with comparable sparsity for each $A$ and $\tilde A$,
it is natural that the time $T_{solve}$ of Krylov iterations for N-PSAI($tol$)
is more than that of S-PSAI($tol$) as N-PSAI($tol$)
solves the $s+1$ linear systems. Even so, however, the time $T_{solve}$ of
BiCGStab iterations for the $s+1$ systems is negligible, compared to the setup
time $T_{setup}$ of the preconditioner $M$ for each problem.

\subsection{Effectiveness comparison of S-SPAI and S-PSAI($tol$) and that of
N-SPAI and N-PSAI($tol$)}

We attempt to give some comprehensive comparison of the preconditioning
effectiveness of S-SPAI and S-PSAI($tol$) and that of N-SPAI and N-PSAI($tol$).
To do so, we take the sparsity of $M$ as a reasonable standard.
As mentioned previously, all the $M$ obtained by PSAI($tol$)
in Table~\ref{psaidata} satisfy the desired accuracy $\delta=0.4$. Now,
for each $A$, we adjust the number of most profitable
indices per loop and the maximum $l_{\max}$ of loops in the SPAI code, so that
the sparsity of $M$ generated by N-SPAI is approximately equal to that of the
corresponding preconditioner obtained by N-PSAI($tol$) in Table~\ref{psaidata}.
We then perform S-SPAI with these parameters to compute a sparse approximate
inverse of $A$. We aim to show that, {\em given a similar sparsity of
preconditioners}, N-PSAI($tol$) may capture sparsity patterns of $\tilde A^{-1}$
more effectively and thus produces better preconditioners than N-SPAI
for regular sparse matrices.
We also show that there are the same findings for S-PSAI($tol$) and S-SPAI.
These demonstrate that PSAI($tol$) captures the sparsity patterns
of $A^{-1}$ and $\tilde A^{-1}$ more effectively and are thus more
effective for preconditioning than SPAI. The cause should be due to the
fact that PSAI($tol$) does the job in a globally optimal sense while SPAI does it
in a locally optimal sense, thereby confirming  our comments in the beginning of
\S\ref{overview}.

To make each $M$ by N-SPAI as (almost) equally sparse as that obtained by
N-PSAI($tol$) for each $\tilde A$, we take the parameters in the SPAI 3.2
code as\\
\texttt{
\begin{tabular}{ll}
fs\_541\_3&\hspace{1cm}'-mn 6 -ns 15'\\
fs\_541\_4&\hspace{1cm}'-mn 6 -ns 20'\\
rajat04 &\hspace{1cm}'-mn 8 -ns 20'\\
rajat12 &\hspace{1cm}'-mn 12 -ns 10'\\
tols4000&\hspace{1cm}'-mn 5 -ns 20'\\
ASIC\_100k&\hspace{1cm}'-mn 6 -ns 2'\\
dc1 &\hspace{1cm}'-mn 5 -ns 7'\\
dc2 &\hspace{1cm}'-mn 5 -ns 7'\\
dc3 &\hspace{1cm}'-mn 5 -ns 7'\\
trans4&\hspace{1cm}'-mn 6 -ns 15'\\
trans5&\hspace{1cm}'-mn 6 -ns 7'
\end{tabular}
}\\
where $ns=l_{\max}$ in our notation with '-ns\ j' denoting $ns=j$,
and $mn$ is the number of most profitable
indices per loop with '-mn\ j' denoting $mn=j$. Table~\ref{spaidata2} reports
the results.

\begin{table}{\footnotesize
\begin{center}
\caption{Numerical results obtained by SPAI. $T_{setup}$ and $T_{solve}$: the setup
time of $M$ and the time of BiCGStab iterations for the preconditioned linear
systems, respectively. The relative residual norm $rr=a\cdot\varepsilon$. For
S-SPAI, $spar=nnz(M)/nnz(A)$, $iter$: \# iterations of BiCGStab for (\ref{eq:Axb}),
while for N-SPAI, $spar=nnz(M)/nnz(\tilde A)$ and $iter$: the maximum of
\# iterations of BiCGStab for the $s+1$ systems (\ref{regprob}) and (\ref{auxi}).}
\label{spaidata2}
\begin{tabular}{|c|c|c|c|c|c|c|c|c|c|c|}
\hline
&\multicolumn{5}{c|}{S-SPAI}&\multicolumn{5}{c|}{N-SPAI}\\ \cline{2-11}
&$T_{setup}$&$T_{solve}$&$spar$&$iter$&$a$&$T_{setup}$&$T_{solve}$&$spar$&$iter$&$a$\\
\hline
fs\_541\_3&0.32&0.01&1.65&48&0.72 &0.20&0.01&1.64&20&0.40\\ \hline
fs\_541\_4&0.15&0.01&0.96&15&0.11  &0.03&0.01&0.94&7&0.34\\ \hline
rajat04&1.32&0.01&0.56&20&0.99  &0.16&0.02&0.50&12&0.77\\\hline
rajat12&2.71&0.02&2.05&42&0.39   &0.42&0.09&1.99&29&0.67\\\hline
tols4000&0.10&0.02&1.01&3&0.44 &0.05&0.02&1.02&4&0.48\\\hline
ASIC\_100k&$78$h&$10.16$&$0.65$&$500$&$1560$
& 1807&12.61&0.76&10&0.92\\ \hline 
dc1&$*$&$*$&$*$&$*$&$*$&7275&88.25&1.66&500&0.73\\\hline
dc2&$*$&$*$&$*$&$*$&$*$&6208&54.33&1.62&90&0.57\\ \hline
dc3&$*$&$*$&$*$&$*$&$*$&5198&70.63&1.63&422&1.36\\ \hline
trans4&$*$&$*$&$*$&$*$&$*$&3571&9.23&1.82&40&0.47\\ \hline
trans5&$*$&$*$&$*$&$*$&$*$&3033&19.52&1.71&81&0.82\\\hline
\end{tabular}
\end{center}}
\end{table}
Based on Tables~\ref{psaidata}--\ref{spaidata2}, we next compare the
preconditioning effectiveness of S-SPAI and S-PSAI($tol$) and that
of N-SPAI and N-PSAI($tol$), respectively.

As $iter$ indicates, obviously, N-PSAI($tol$) is often considerably
superior to N-SPAI for all the test matrices except for $\tilde A$ resulting
from {\em rajat12}. The results on the last six larger matrices are more
illustrative, where PSAI($tol$)
exhibited a considerable superiority to SPAI for regular sparse
linear systems. Particularly, for {\em dc1}, when N-SPAI is applied,
BiCGStab consumed exactly the maximum 500 iterations to
achieve the accuracy requirement, while N-PSAI($tol$) only used the maximum
200 iterations; for {\em dc3}, the preconditioner produced by N-PSAI($tol$) is
much more effective than that obtained by N-SPAI, and BiCGStab preconditioned by
N-PSAI($tol$) is seven times faster than that by N-SPAI.

When applied to the original irregular sparse (\ref{eq:Axb}) directly,
S-PSAI($tol$) shows more substantial improvements over S-SPAI.
For {\em ASIC\_100k}, S-PSAI($tol$) failed to compute $M$.
S-SPAI consumed 78 hours to construct
a sparse approximate inverse $M$ of it, but $M$ is much poorer than that obtained
by N-SPAI and BiCGStab failed to converge after 500 iterations with the actual
relative residual norm $rr=1.56\times 10^{-3}$. The reason
should be that good approximate inverses of the matrix are irregular
sparse, but some columns of $M$ are too sparse to capture enough entries of
large magnitude in the corresponding columns of $A^{-1}$.
For the first four matrices, we see from Tables~\ref{psaidata}--\ref{spaidata2}
that two $M$ for each $A$ have very comparable sparsity, but the results
clearly illustrate that S-PSAI($tol$) is
considerably more effective for preconditioning than S-SPAI for the four
matrices. In terms of $iter$, S-PSAI($tol$) is eight times, three times,
twice and nearly one and a half times as fast as S-SPAI for the four problems,
respectively, as the corresponding $iter$ indicate. So S-PSAI($tol$) results
in a more substantial acceleration of BiCGStab than S-SPAI. This justifies that
PSAI($tol$) captures a better sparsity pattern of $A^{-1}$ than
SPAI for $A$ irregular sparse and computes a more effective preconditioner.

Summarizing the above, we conclude that PSAI($tol$) itself is effective
for preconditioning no matter whether a matrix is regular sparse
or not, while SPAI may work well for regular sparse
matrices but may be ineffective when $A$ is irregular sparse.
Even for regular sparse linear systems, PSAI($tol$) can outperform
SPAI considerably for preconditioning. Taking the construction cost of
preconditioners by SPAI and PSAI($tol$) into account, to make them
computationally practical, we should
apply them to regular sparse linear systems. Therefore, for
an irregular sparse linear system, a good means is to transform it into
some regular sparse problems, so that SPAI and PSAI($tol$) are relatively
efficient for computing possibly effective sparse approximate inverses.

As a last note, we make some comments on the computational efficiency
of SPAI and PSAI($tol$). Since they are different procedures that are
derived from different principles and have different features,
the computational complexity of each of them is
quite involved, and the efficiency depends on several factors including the
pattern of $A$ itself. It appears very hard, if not impossible, to compare
their flops. Hence we cannot draw any definitive conclusion on the
efficiency comparison of SPAI and PSAI($tol$). There must be cases where
one procedure wins the other, and vice versa.

Note that the PSAI($tol$) code is experimental and written in {\sc Matlab}
language for a sequential computing environment, so the performance
(run times) of PSAI($tol$) may be far from optimized. In contrast,
the SPAI 3.2 code is well programmed in C/MPI designed for distributed
parallel computers. 
A parallel PSAI($tol$) code in C/MPI or Fortran is involved
and will be left as our future work. We will expect that the performance of
PSAI($tol$) is improved substantially.

\section{Conclusions}\label{conclude}

The SPAI and PSAI($tol$) procedures are quite effective for
preconditioning linear systems arising from a lot of
real-world problems. However, the situation
is rather disappointing for quite common irregular sparse linear systems.
In this case, none of SPAI and PSAI($tol$) works well generally due to the
very high cost and/or possibly excessive storage requirement of
constructing preconditioners. However, for a regular sparse linear system,
we have shown that SPAI and, especially, PSAI($tol$) are efficient to
construct possibly effective preconditioners. Motivated by this
crucial feature and exploiting the Sherman--Morrison--Woodbury
formula, we have transformed the original irregular sparse linear system
into some regular sparse ones, for which SPAI and PSAI($tol$)
are practically efficient. We have considered numerous theoretical and practical
issues, including the non-singularity and conditioning of $\tilde A$. We
have proved that $\tilde A$ is ensured to be nonsingular for a number of
important classes of matrices,
and that its conditioning is generally better than $A$ for some of them.
We have derived stopping criteria for iterative solutions of new systems,
so that the approximate solution of the original problem achieves the
prescribed accuracy. Since irregular matrices are quite common
in practice, we have extended the applicability of SPAI and PSAI($tol$)
substantially. Numerical experiments have demonstrated that our approach works
well and improves the performance of SPAI and PSAI($tol$) substantially.

Our approach may be applicable to factorized sparse approximate inverse
preconditioning procedures \cite{benzi02}. Although reorderings of $A$
may help such procedures to reduce fill-ins, enhance robustness and
improve numerical stability when constructing factorized SAI preconditioners,
this is not always so and in fact even may make things worse
for $A$ irregular sparse \cite{benzi03}. Our approach may be combined
with reorderings to construct effective factorized SAI preconditioners for
regular sparse linear systems resulting from the irregular sparse one.


Finally, we should point out that the performance results in this paper (run times)
are for modest sized and not very large problems, for which a direct solver may
be faster than any of the iterative solvers. But the goal of our paper
is to provide a new algorithm that is efficient to construct effective
sparse approximate inverses so as to reduce the number of iterations required
substantially. This has implications for very large matrices for which direct
solvers are not feasible and for a parallel computing environment.

\bigskip

{\bf Acknowledgments}. We thank the editor Professor Davis very much
for his valuable data analysis on all the test matrices
in \cite{davis2011university} and for his suggestions.
We are very indebted to the referees for their comments and suggestions.
All of these made us improve the presentation of the paper substantially.

\end{document}